\newtheorem{lemma}{Lemma}[section]
\newtheorem{proposition}[lemma]{Proposition}
\newtheorem{theorem}[lemma]{Theorem}
\newtheorem{corollary}[lemma]{Corollary}
\newtheorem{definition}[lemma]{Definition}
\newcommand{\CC}{\mathbb C}
\newcommand{\HH}{\mathbb H}
\newcommand{\NN}{\mathbb N}
\newcommand{\PP}{\mathbb P}
\newcommand{\QQ}{\mathbb Q}
\newcommand{\ZZ}{\mathbb Z}
\newcommand{\cD}{\mathcal D}
\newcommand{\cF}{\mathcal F}
\newcommand{\cH}{\mathcal H}
\newcommand{\cM}{\mathcal M}
\newcommand{\cS}{\mathcal S}
\renewcommand{\Tilde}{\widetilde}
\renewcommand{\Bar}{\overline}
\newcommand{\Sp}{\mathop{\mathrm {Sp}}\nolimits}
\newcommand{\PSp}{\mathop{\mathrm {PSp}}\nolimits}
\newcommand{\SL}{\mathop{\mathrm {SL}}\nolimits}
\newcommand{\SO}{\mathop{\mathrm {SO}}\nolimits}
\newcommand{\Orth}{\mathop{\null\mathrm {O}}\nolimits}
\newcommand{\TS}{\mathop{\Tilde{\mathrm {SO}}^+}\nolimits}
\newcommand{\TO}{\mathop{\Tilde{\mathrm {O}}^+}\nolimits}
\newcommand{\Lift}{\mathop{\mathrm {Lift}}\nolimits}
\newcommand{\Pic}{\mathop{\mathrm {Pic}}\nolimits}
\renewcommand{\Im}{\mathop{\mathrm {Im}}\nolimits}
\newcommand{\rank}{\mathop{\mathrm {rank}}\nolimits}
\newcommand{\sign}{\mathop{\mathrm {sign}}\nolimits}
\newcommand{\id}{\mathop{\mathrm {id}}\nolimits}
\newcommand{\supp}{\mathop{\mathrm {supp}}\nolimits}
\newcommand{\gz}{\mathfrak z}
\newcommand{\gG}{\mathfrak G}
\renewcommand{\div}{\mathop{\mathrm {div}}\nolimits}
\newcommand{\Bdiv}{\mathop{\mathrm {R.div}}\nolimits}
\newcommand{\Kthree}{\mathop{\mathrm {K3}}\nolimits}
\newcommand{\qedsymbol}{\mbox{$\Box$}}
\newcommand{\qed}{\unskip\nobreak\hfil\penalty50\hskip1em\hbox{}\nobreak
\hfill\qedsymbol\parfillskip=0pt\finalhyphendemerits=0}
\newenvironment{proof}{\begin{ProofwCaption}{Proof}}{\end{ProofwCaption}}
\newenvironment{ProofwCaption}[1]
 {\addvspace\theorempreskipamount \noindent{\it #1.}\rm}
 {\qed \par \addvspace\theorempostskipamount}
\begin{document}

\title{Reflective modular forms in  algebraic geometry}
\author{V.~Gritsenko}
\date{}
\maketitle
\begin{abstract}
We prove that the existence of a strongly reflective modular form of a large  weight
implies that the Kodaira dimension of the corresponding modular variety
is negative or, in some special case, it is equal to zero.
Using the Jacobi lifting we construct three  towers of strongly reflective
modular forms with the simplest possible divisor.
In particular we obtain a Jacobi lifting  construction of the Borcherds-Enriques
modular  form $\Phi_4$ and Jacobi liftings of automorphic discriminants
of the K\"ahler  moduli  of Del Pezzo surfaces constructed recently by Yoshikawa.
We obtain also  three modular varieties of dimension $4$, $6$ and $7$
of Kodaira dimension $0$.
\end{abstract}

\bigskip
\section{Introduction}
\label{sec0}
A reflective modular form is a  modular form on an orthogonal group of
type $\Orth(2,n)$ whose divisor is determined by reflections.
A strongly reflective form
that vanishes of order one along the reflective divisors
is the denominator function
of a Lorentzian Kac-Moody (super) Lie algebra of Borcherds type.
For example the famous Borcherds form $\Phi_{12}$ in $26$ variables (see \cite{B1})
defines the Fake Monster Lie algebra.

Reflective  modular forms are very rare. Some of them have  geometric interpretation
as automorphic discriminants of some moduli spaces, for example of moduli
spaces of lattice polarised $\Kthree$ surfaces (see \cite{GN5}).
The first such  example was the  Borcherds--Enriques  form
$\Phi_4$ (see \cite{B2}). This  strongly reflective form is the automorphic discriminant of the
moduli space of Enriques surfaces and it is the  denominator function
of the fake monster superalgebra. In 2009  K.-I.~Yoshikawa constructed the automorphic
discriminant $\Phi_V$ of the K\"ahler  moduli  of  a Del Pezzo surface
$V$ of $1\le \deg V\le 9$.
These functions are also related to the analytic torsion of special Calabi--Yau
threefolds (see \cite{Y}). The corresponding Borcherds superalgebras  were predicted
in the conjecture of Harvey--Moore (see \cite[\S 7]{HM}).
We note that  the generators and relations of
Lorentzian Kac--Moody (super) Lie algebras of Borcherds type
are defined by the Fourier expansion of a reflective modular form
at a  zero-dimensional cusp (see \cite{GN1} and \cite{GN3}).
All  modular forms mentioned above were constructed
as Borcherds automorphic products which gives us the multiplicities
of the positive roots.

The quasi pull-backs of the strongly reflective  form $\Phi_{12}$  help us to prove the general
type  of some modular varieties of orthogonal type.
See \cite{GHS1} where we proved  that the moduli space of polarised $\Kthree$ surfaces
of degree $2d$ is of the  maximal Kodaira dimension if $d>61$.
In \S 1 of this paper we give a new geometric definition based on the results of
\cite{GHS1} of
the reflective modular forms as modular forms with a small divisor.
It gives us a new interesting application of reflective modular forms
which is quite  opposite to the results of \cite{GHS1}--\cite{GHS2}.
In Theorem \ref{thm-kdim} we prove that the existence of a strongly reflective modular form
of a large  weight implies that the Kodaira dimension of the corresponding modular variety
is negative or, in some special case, it is equal to zero.
In \S 2 we give three new examples of  modular varieties  of orthogonal type
of dimension $4$, $6$ and $7$ of Kodaira dimension $0$ (varieties of Calabi--Yau type)
and we hope to consider more examples in the near future.

The geometric examples of \S 2 are based on the three towers of strongly reflective
modular forms which we construct in \S3--\S5 with the help of Jacobi lifting.
It is a rather surprising fact that we obtain very simple  Jacobi lifting
constructions of the Borcherds form $\Phi_4$ and of the Yoshikawa functions $\Phi_V$.
These modular forms constitute the  $D_8$-tower of the Jacobi liftings
(see \S 3). In particular we obtain simple formulae for the Fourier expansions
of $\Phi_V$, i.e. the explicit generating formulae for the imaginary simple
(super) roots of the corresponding Borcherds superalgebras.
In \S 4 and \S 5 we present the towers of the strongly reflective modular  forms
based on the modular forms of singular weight for the root systems $3A_2$
and $4A_1$. The $D_8$-, $3A_2$- and $4A_1$-towers of the Jacobi liftings give
us $15=8+3+4$ strongly reflective modular forms.
We note that the last function in  the $4A_1$-tower is
the Siegel modular form $\Delta_5$ which is
the square root of the Igusa modular form of weight $10$.

In the conclusion  we formulate a conjecture about  strongly reflective
modular forms  similar to the modular forms considered in this paper.

\section{Modular varieties of orthogonal type and reflective modular forms}
\label{sec1}

We start with the general set-up.
Let $L$ be an even integral   lattice with a
quadratic form of signature $(2,n)$ and let
$$
  \cD(L)=\{[Z] \in \PP(L\otimes \CC) \mid
  (Z,Z)=0,\ (Z,\Bar Z)>0\}^+
$$
be the associated $n$-dimensional classical Hermitian domain  of type $IV$
(here $+$ denotes one of its two connected components).
We denote by $\Orth^+(L)$ the index $2$ subgroup of the integral orthogonal group $\Orth(L)$
preserving $\cD(L)$.
For any $v\in L\otimes \QQ$ such that $v^2=(v,v)<0$ we define  the {\it rational quadratic divisor}
$$
\cD_v=\cD_v(L)=\{[Z] \in \cD(L)\mid (Z,v)=0\}\cong \cD(v^\perp_L)
$$
where $v^\perp_L$ is an even integral  lattice of signature $(2,n-1)$.
If $\Gamma<\Orth^+(L)$ is of finite index we
define  the corresponding {\it modular variety}
$$
\cF_L(\Gamma)=\Gamma\backslash \cD(L),
$$
which is a quasi-projective variety of dimension $n$.

The important examples of modular varieties of orthogonal type are

\noindent
a) the moduli spaces of polarised $\Kthree$ surfaces;

\noindent
b) the moduli spaces of lattice polarised $\Kthree$ surfaces
(the dimension of such a moduli space is smaller than  $19$);

\noindent
c) the moduli spaces of polarised Abelian or Kummer  surfaces;

\noindent
d) the moduli space of Enriques surfaces;

\noindent
e) the periodic domains of polarised  irreducible symplectic varieties
(the dimension of a modular variety of this type  is equal to $4$, $5$, $20$ or $21$).

One of the main tools in the study of the geometry of modular varieties
is the theory of modular forms with respect to an orthogonal group.
In the next definition we bear in mind
Koecher's principle (see \cite{B1}, \cite{Bai}).
\begin{definition}\label{def-mf} Let $\sign(L)=(2,n)$ with $n\ge 3$.
A  modular form of weight $k$ and character $\chi\colon \Gamma\to
\CC^*$ with respect to $\Gamma$  is a holomorphic function $F\colon\cD(L)^\bullet\to \CC$
on the affine cone $\cD(L)^\bullet$ over $\cD(L)$ such that
$$
  F(tZ)=t^{-k}F(Z)\quad \forall\,t\in \CC^*,
$$
$$
  F(gZ)=\chi(g)F(Z)\quad  \forall\,g\in \Gamma.
$$
A modular form is called a  cusp form if it vanishes at every cusp
(a boundary component of the Baily--Borel compactification of $\cF_L(\Gamma)$).
\end{definition}

We denote the linear spaces of modular and cusp forms of weight $k$ and
character (of finite order) $\chi$  by $M_k(\Gamma,\chi)$ and
$S_k(\Gamma,\chi)$ respectively.
If $M_k(\Gamma,\chi)$ is nonzero then one knows that $k\ge (n-2)/2$ (see \cite{G1}--\cite{G2}).
The minimal weight $k=(n-2)/2$ is called {\it singular}.
The weight $k=n=\dim(\cF_L(\Gamma))$  is called {\it canonical} because according to  Freitag's criterion
$$
S_n(\Gamma, \det)\cong H^{0}\bigl(\Bar{\cF}_L(\Gamma), \Omega(\Bar{\cF}_L(\Gamma))\bigr),
$$
where $\Bar{\cF}_L(\Gamma))$ is a smooth compact model of the modular variety
$\Bar{\cF}_L(\Gamma)$ and $\Omega(\Bar{\cF}_L(\Gamma))$ is the sheaf of  canonical
differential forms
(see \cite[Hilfssatz 2.1, Kap. 3]{F}).
We say that
\begin{equation}\label{wt-sb}
{\rm the \ weight\ } \ k  {\rm\ \  is\ small \  if \ \ } k<n\  {\rm\  or \   big\  if\ \ } k\ge n.
\end{equation}
For applications, the most important  subgroups of $\Orth^+(L)$ are
the stable orthogonal groups
\begin{equation*}
\Tilde{\Orth}^+(L) =\{g\in \Orth^+(L)\mid g|_{L^\vee/L}=\id\},\quad
\Tilde{\SO}^+(L)=\SO(L)\cap \Tilde{\Orth}^+(L)
\end{equation*}
where $L^\vee$ is the dual lattice of $L$.
If the lattice $L$ contains two orthogonal copies of the hyperbolic plane
$U\cong \left(\begin{smallmatrix}
0&1\\1&0
\end{smallmatrix}\right)$
(the even unimodular lattice of signature $(1,1)$) and its reduction modulo
$2$ (resp. $3$) is  of rank at least $6$ (resp. $5$) then  $\Tilde{\Orth}^+(L)$ has only one
non-trivial character $\det$ (see \cite{GHS3}).
\smallskip

For any  non isotropic $r\in L$ we denote by $\sigma_r$  reflection with respect to $r$
$$
\sigma_r(l)=l-\frac{2(l,r)}{(r,r)}r\in  \Orth(L\otimes \QQ).
$$
This is an element of  $\Orth^+(L\otimes \QQ)$ if and only if $(r,r)<0$.
If $r^2=-2$, then  $\sigma_r(l)\in \Tilde{\Orth}^+(L)$.
In general, the ramification divisor of $\cF_L(\Tilde{\Orth}^+(L))$ is larger than
the  union of the rational quadratic divisors $\cD_r(L)$ defined by  $(-2)$-roots in  $L$.

\begin{definition}\label{def-reflf}
A modular form $F\in M_k(\Gamma,\chi)$ is called {\bf reflective} if
\begin{equation}\label{eq-reflf}
\supp (\div F) \subset \bigcup_{\substack{\pm r\in L \vspace{1\jot}\\
r\  {\rm is\   primitive}\vspace{1\jot}\\
\sigma_r\in \Gamma\text{ or }-\sigma_r\in \Gamma}} \cD_r(L)
=\Bdiv(\pi_\Gamma).
\end{equation}
We call $F$ {\bf strongly reflective} if the multiplicity of any irreducible component
of $\div F$ is equal to one.
\end{definition}

We note that $\cD_r(L)=\cD_{-r}(L)$.
In the  definition of  reflective modular forms in  \cite{GN4}
only the first condition $\sigma_r\in \Gamma$ was considered.
The present definition  is explained by the following result
proved  in \cite[Corollary 2.13]{GHS1}
\begin{proposition}\label{prop-br-div}
The ramification divisors of the modular projection
$$
\pi_\Gamma: \cD(L)\to \Gamma\backslash \cD(L)
$$
are induced by elements $g\in \Gamma$ such that $g$ or $-g$
is a reflection with respect to a vector in $L$.
\end{proposition}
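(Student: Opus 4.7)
The plan is to classify the elements $g \in \Gamma$ whose action on $\cD(L)$ fixes a divisor, by an eigenspace and signature analysis on $L \otimes \CC$. Such a $g$ must have finite order, since $\Gamma$ acts properly discontinuously on $\cD(L)$ and every point of the fixed divisor has nontrivial stabilizer containing $g$. Hence $g$ is diagonalizable over $\CC$ with eigenvalues that are roots of unity. Writing $V_\lambda = \ker(g - \lambda\cdot\id) \subset L \otimes \CC$, the fixed locus decomposes as
$$
\cD(L)^g = \bigcup_{\lambda} \bigl(\PP(V_\lambda) \cap \cD(L)\bigr).
$$

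Next I would use the invariance of the bilinear form, which gives $(V_\lambda, V_\mu) = 0$ unless $\lambda\mu = 1$; in particular $V_\lambda$ is totally isotropic whenever $\lambda \ne \pm 1$. Since $L \otimes \CC$ has dimension $n+2$, its totally isotropic subspaces have complex dimension at most $\lfloor (n+2)/2 \rfloor$. A routine count shows that $\PP(V_\lambda) \cap \cD(L)$ has dimension $\dim V_\lambda - 2$ when $\lambda = \pm 1$ (the isotropy condition $(Z,Z)=0$ cuts down the dimension) and at most $\dim V_\lambda - 1$ when $\lambda \ne \pm 1$ (isotropy is automatic). For the fixed locus to realise the required dimension $n-1$ with $n \ge 3$, the case $\lambda \ne \pm 1$ would force $\dim V_\lambda = n > \lfloor(n+2)/2\rfloor$, which is impossible. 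Therefore $\lambda = \pm 1$ and the relevant eigenspace has dimension $n+1$.

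Replacing $g$ by $-g$ if necessary, I may assume $\dim V_1 = n + 1$. Its orthogonal complement is then a rational line $\QQ r$, where $r \in L$ can be chosen primitive. The element $g$ preserves $\CC r$ and acts there by a scalar $\mu$ with $\mu^2 = 1$; the case $\mu = 1$ forces $g = \id$, so $\mu = -1$ and $g = \sigma_r$. For the component $\cD_r(L) = \PP(r^\perp) \cap \cD(L)$ to be a nonempty divisor of $\cD(L)$, the real form of $V_1 = r^\perp$ must have signature $(2,n-1)$, equivalently $(r,r) < 0$. Conversely, if $g = \pm\sigma_r$ with $(r,r) < 0$, then $g$ acts as $\pm 1$ on $r^\perp$, so $\cD_r(L)$ is pointwise fixed by $g$ and lies in the ramification locus of $\pi_\Gamma$.

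The main obstacle I expect is the dimension/signature count that rules out eigenvalues $\lambda \ne \pm 1$, since it is the step that uses the full force of the signature $(2,n)$ hypothesis and the assumption $n \ge 3$. Once this is in hand, the remaining classification is an immediate consequence of the structure of a finite-order isometry of $L$ whose $(\pm 1)$-eigenspace is of codimension one, and the converse direction is tautological from the definition of $\sigma_r$.
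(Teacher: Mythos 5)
Your proof is correct, and it coincides in substance with the intended one: the paper does not prove this proposition itself but quotes it from \cite[Corollary 2.13]{GHS1}, where the argument is exactly this kind of eigenspace--signature analysis (finite order, $(V_\lambda,V_\mu)=0$ unless $\lambda\mu=1$, isotropic-dimension bound ruling out $\lambda\neq\pm1$, rationality of the codimension-one eigenspace forcing $\pm g=\sigma_r$ with $r\in L$, $(r,r)<0$). The only step you leave implicit --- that a divisor lying in the ramification locus is pointwise fixed by a single nontrivial $g\in\Gamma$ --- is standard, since $\Gamma$ is countable and fixed loci are closed, so a generic-stabilizer argument supplies such a $g$.
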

According to the last proposition the union of the rational
quadratic divisors in the right hand side of (\ref{eq-reflf})
is the ramification divisor $\Bdiv(\pi_\Gamma)$ of the modular projection $\pi_\Gamma$.
\smallskip

\noindent
{\bf Example 1} {\it The Borcherds modular form $\Phi_{12}$}.
The most famous  example of  a  strongly reflective modular
form is $\Phi_{12}\in M_{12}(O^+(II_{2,26}), \det)$ (see \cite{B1}).
This is the unique modular form of singular weight $12$ with character $\det$
with respect to the orthogonal group  $O^+(II_{2,26})$ of the even unimodular lattice
of signature $(2,26)$. The form $\Phi_{12}$ is the Kac-Weyl-Borcherds denominator function
of the Fake Monster Lie algebra.
We  expect only finite number  of reflective modular forms
(see \cite{GN3}). The Borcherds automorphic products give us
some number of interesting examples of strongly reflective modular forms
(see \cite{B1}--\cite{B4}).
Note that for a large class of integral lattices any reflective modular form has
a Borcherds product according to  \cite{Br}.

As we mentioned above, if the rank of the quadratic  lattice is smaller or equal to $19$
then one can interpreted the modular varieties of orthogonal type as
moduli spaces of lattice polarised  $\Kthree$ surfaces.
The stable locus of the  reflections of the  integral orthogonal
group is  related to the special singular $\Kthree$ surfaces. It gives us an interpretation
of reflective modular forms as   {\it automorphic discriminants} of these moduli spaces
(see \cite{GN5}).
Moreover, if the modular form is strongly reflective, then the Lorentzian Kac--Moody algebra
determined by the automorphic discriminant can be considered
as a variant of the arithmetic mirror symmetry
for these $\Kthree$ surfaces (see \cite{GN6}).
We remark also that the reflective modular forms of type $\Phi_{12}$  of singular weight
with respect to congruence subgroups of $\SL_2(\ZZ)$
were classified by N. ~Scheithauer (see \cite{Sch}).
\smallskip

\noindent
{\bf Example 2} {\it Igusa modular forms.}
We can apply Definition \ref{def-reflf} to Siegel modular forms of genus $2$
because $\PSp_2(\ZZ)$ is isomorphic to
$\SO^+(L(A_1))$ where $L(A_1)=2U\oplus A_1(-1)=2U\oplus \langle -2\rangle$.
The Siegel modular form of odd weight
$\Delta_{35}\in S_{35}(\Sp_2(\ZZ))$
and the product of the ten  even theta-constants
$
\Delta_{5}\in S_5(\Sp_2(\ZZ), \chi_2)
$
($\chi_2$ is a character of order $2$) are strongly reflective (see \cite{GN1}--\cite{GN2}).
One more classical example is  the ``most odd" even Siegel theta-constant $\Delta_{1/2}$
which is a modular form of weight $1/2$ with respect to the paramodular group
$\Gamma_4$. These examples are  part of the classification of all reflective forms for
the maximal lattices of signature $(2,3)$ in  \cite{GN4}.
Moreover, $\Delta_5$ and $\Delta_{1/2}$ are examples of modular forms
with the simplest divisor (see \cite{CG}).
\smallskip

\noindent
{\bf Remark 1.} {\it Modular forms of canonical weight.}
Let $\sign(L)=(2,n)$. We consider
$F\in M_n(\Gamma,\det)$.
If $\sigma_r\in \Gamma$, then
$F(\sigma_r(Z))=-F(Z)$. Hence $F$ vanishes along $\cD_r$.
If $-\sigma_r\in \Gamma$, then
$$
(-1)^nF(\sigma_r(Z))=F((-\sigma_r)(Z))=\det (-\sigma_r)F(Z)=(-1)^{n+1}F(Z)
$$
and $F$ also vanishes along $\cD_r$.
Therefore  any $\Gamma$-modular form of canonical weight with character $\det$
vanishes along $\Bdiv(\pi_\Gamma)$.
\smallskip

\noindent
{\bf Remark 2.}
{\it Modular forms with small or  big divisor.}
According to the definition above a modular form $F\in M_k(\Gamma,\chi)$
is strongly reflective if and only if
$$
\div F \le \Bdiv(\pi_\Gamma),
$$
i.e.
{\it the divisor of a strongly reflective modular form is small}.
We say that the divisor of a modular form $F\in M_k(\Gamma,\chi)$
is {\it big} if
$$
\div F  \ge  \Bdiv(\pi_\Gamma).
$$
\noindent

The role of modular forms of small weight
(see (\ref{wt-sb})) with a big divisor
in the birational geometry of moduli spaces  was clarified in \cite{GHS1}:
the modular variety $\cF_L(\Gamma)$ is of general type
if there exists a cusp form of small weight with a big divisor.
More exactly, we proved the following theorem called
{\it low weight cusp form trick}\,:
\begin{theorem}\label{thm-gt}
Let $n\ge 9$. The modular variety $\cF_L(\Gamma)$ is of general type
if there exists   $F\in S_k(\Gamma,\det^{\varepsilon})$ ($\varepsilon =0$ or $1$)
of small weight $k<n$ such that
$\div F\ge \Bdiv(\pi_\Gamma)$.
\end{theorem}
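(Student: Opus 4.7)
The strategy is to produce enough pluricanonical forms on a smooth projective toroidal compactification $\Bar{\cF}_L(\Gamma)$ of $\cF_L(\Gamma)$ by multiplying powers of the low weight cusp form $F$ by a large family of cusp forms of complementary weight. First I would fix a toroidal resolution $\pi:\Bar{\cF}_L(\Gamma)\to \cF_L(\Gamma)^{BB}$ of the Baily--Borel compactification and recall the dictionary: a modular form $\Phi \in M_{mn}(\Gamma,\det^m)$ corresponds to a rational section of $mK_{\cF_L(\Gamma)}$ via the isomorphism of Freitag noted in the excerpt. The issue is that $\pi^*$ of a pluricanonical form acquires poles (or needs zeros), so I need to identify exactly which modular forms descend to true global sections of $mK_{\Bar{\cF}_L(\Gamma)}$.

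The three obstructions to extension are (i) the ramification divisor $\Bdiv(\pi_\Gamma)$, (ii) the toroidal boundary, and (iii) the interior (elliptic) singularities of $\cF_L(\Gamma)$. For (i), Riemann--Hurwitz along an order-$2$ reflection divisor shows that a $\Phi\in M_{mn}(\Gamma,\det^m)$ descends to $mK$ on the smooth part only if its pullback vanishes to order $\ge m$ along every component of $\Bdiv(\pi_\Gamma)$ (this is a multiplicative strengthening of Remark~1 in the excerpt). For (ii), the bound $n\ge 9$ enters via Tai's criterion: a cusp form of weight $mn$ gives rise to a pluricanonical section on the toroidal compactification, because the vanishing order forced by the cusp condition dominates the discrepancy contribution from a toroidal desingularisation when the dimension is $\ge 9$. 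For (iii), again by Tai, $n\ge 9$ ensures that all interior quotient singularities are canonical, so no further vanishing along them is required.

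With these extension criteria in hand, the construction is to consider products $F^m\cdot G$, where $G$ ranges over the space $S_{m(n-k)}(\Gamma,\det^{m(1-\varepsilon)})$. By hypothesis $\div F \ge \Bdiv(\pi_\Gamma)$, so $F^m$ vanishes to order at least $m$ on every component of the ramification divisor, exactly cancelling the obstruction (i); the factor $F^m$ is also a cusp form, which together with $G$ being a cusp form secures (ii). The weight and character balance as
\begin{equation*}
\weight(F^mG)=mk+m(n-k)=mn,\qquad \chi(F^mG)=\det^{m\varepsilon}\cdot\det^{m(1-\varepsilon)}=\det^m,
\end{equation*}
so $F^mG$ indeed defines an element of $H^0(\Bar{\cF}_L(\Gamma), mK)$. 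Multiplication by $F^m$ is injective on $S_{m(n-k)}(\Gamma,\det^{m(1-\varepsilon)})$, so the dimension $P_m(\Bar{\cF}_L(\Gamma))$ is bounded below by $\dim S_{m(n-k)}(\Gamma,\det^{m(1-\varepsilon)})$. By Hirzebruch--Mumford proportionality this grows like $c(n-k)^n m^n$ with $c>0$, so the Kodaira--Iitaka dimension of $\Bar{\cF}_L(\Gamma)$ is $n$, which is the defining property of general type.

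The main obstacle will be the verification of (ii), i.e. that a pluricanonical section defined on the smooth open part actually extends across the toroidal boundary without further conditions on $G$. This requires invoking the full Tai--Mumford extension theorem and comparing the toroidal discrepancy divisor with the vanishing of $F^mG$ at the boundary components coming from both $0$-dimensional and $1$-dimensional cusps of the Baily--Borel compactification; the condition $n\ge 9$ is used in an essential way here. A minor secondary point is to ensure that we have not fixed a single character, which is why the statement allows both $\varepsilon=0$ and $\varepsilon=1$: the product construction then absorbs the parity of $\varepsilon m$ into the choice of character for $G$.
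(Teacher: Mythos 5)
Your plan is essentially the paper's own argument: the paper does not reprove this statement but cites it as a special case of \cite[Theorem 1.1]{GHS1}, whose proof is exactly the low weight cusp form trick you describe — multiply $F^m$ (killing the reflective obstruction since $\div F\ge\Bdiv(\pi_\Gamma)$ and the cusp obstruction since $F$ is cuspidal of weight $k<n$) by forms of weight $m(n-k)$, use the Reid--Tai analysis for $n\ge 9$ to see the interior singularities are canonical, and get $m^n$ growth from Hirzebruch--Mumford proportionality. The only cosmetic deviations are that you restrict the complementary factor $G$ to cusp forms (harmless) and spell out the character bookkeeping, so the proposal is correct and follows the same route.
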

This is a particular case of \cite[Theorem 1.1]{GHS1}.
We applied this theorem in order to prove  that  the moduli spaces
of polarised $\Kthree$ surfaces
and the moduli spaces  of polarised
symplectic varieties deformationally equivalent to ${\rm Hilb}^2(\Kthree)$
or to $10$-dimensional symplectic  O'Grady varieties
are of general type (see \cite{GHS1}--\cite{GHS2}).

In this paper we give a new  application of strongly  reflective modular forms,
which is quite opposite to Theorem \ref{thm-gt}. Namely,
the Kodaira dimension of the modular variety $\cF_L(\Gamma)$ is equal to $-\infty$
if there exists a modular  form of big weight with a small  divisor.
More exactly we have
\begin{theorem}\label{thm-kdim}
Let $\sign(L)=(2,n)$ and $n\ge 3$.
Let $F_k\in M_k(\Gamma, \chi)$ be  a strongly reflective modular form
of weight $k$ and character $\chi$ where $\Gamma<\Orth^+(L)$ is of finite index.
By $\kappa(X)$ we denote the Kodaira dimension of $X$.
Then
$$
\kappa(\Gamma\backslash \cD(L))=-\infty
$$
if $k>n$, or $k=n$ and $F_k$ is not a cusp form.
If $k=n$ and $F$ is a cusp form
then
$$
\kappa(\Gamma_\chi\backslash \cD(L))=0,
$$
where $\Gamma_\chi=\ker(\chi\cdot \det)$ is a subgroup of $\Gamma$.
\end{theorem}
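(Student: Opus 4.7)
The plan is to identify pluricanonical sections on a smooth compactification $\Bar X$ of $\Gamma\backslash \cD(L)$ with certain modular forms, and then divide by $F_k^m$. By the extension criterion that underlies Theorem~\ref{thm-gt} (see \cite{GHS1}), a section of $mK_{\Bar X}$ corresponds to a cusp form $G\in S_{mn}(\Gamma,\det^m)$ that vanishes to order at least $m$ along every branch divisor $\cD_r\subseteq \Bdiv(\pi_\Gamma)$. For such a $G$, I would form the quotient $H=G/F_k^m$. Strong reflectivity of $F_k$ says $\div F_k\le \Bdiv(\pi_\Gamma)$ with multiplicity one, so the $m$-fold vanishing of $G$ on $\supp(\div F_k)$ exactly absorbs the pole coming from $1/F_k^m$, making $H\in M_{m(n-k)}(\Gamma,\det^m\chi^{-m})$ a holomorphic modular form.

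The three cases follow by bookkeeping the weight and character of $H$. If $k>n$, the weight $m(n-k)$ is strictly negative, and Koecher's principle (the singular weight bound $k\ge (n-2)/2$ recalled in \S1) forces $H\equiv 0$; thus $G\equiv 0$ for all $m\ge 1$ and $\kappa=-\infty$. If $k=n$ and $F_k$ is not a cusp form, then $H$ has weight zero hence is a constant; a non-zero constant would give $G=c\cdot F_k^m$, a cusp form whose factor $F_k^m$ is not cuspidal, contradiction, so again $G\equiv 0$. For $k=n$ with $F_k$ cuspidal, I would repeat the argument on $\Gamma_\chi=\ker(\chi\cdot\det)$; on this subgroup $\chi$ agrees with $\det$ (since $\det^2=1$), so $F_k\in S_n(\Gamma_\chi,\det)$. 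Granting the identification $\supp(\div F_k)=\Bdiv(\pi_{\Gamma_\chi})$ discussed below, $F_k^m$ itself is a non-trivial section of $mK$, and any other pluricanonical section $G$ yields $G/F_k^m\in M_0(\Gamma_\chi,\det^{2m})=\CC$, so every pluricanonical section is a constant multiple of $F_k^m$. Thus $h^0(mK)=1$ for all $m\ge 1$, giving $\kappa=0$.

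The main obstacle is the equality $\supp(\div F_k)=\Bdiv(\pi_{\Gamma_\chi})$ used in the last case. The inclusion $\supseteq$ is Remark~1 applied to $F_k\in S_n(\Gamma_\chi,\det)$. For the reverse inclusion, I would argue locally: if $\cD_r$ is an irreducible component of $\div F_k$ with $\sigma_r\in \Gamma$, factor $F_k=\ell_r\cdot \Tilde F$ with $\ell_r(Z)=(Z,r)$; by the multiplicity-one (strongly reflective) hypothesis, $\Tilde F$ is non-vanishing on $\cD_r$. Since $\ell_r(\sigma_r Z)=-\ell_r(Z)$, the transformation law $F_k(\sigma_r Z)=\chi(\sigma_r)F_k(Z)$ restricted to the fixed locus $\cD_r$ forces $\chi(\sigma_r)=-1=\det(\sigma_r)$, so $\sigma_r\in \Gamma_\chi$; the case $-\sigma_r\in \Gamma$ is handled analogously, using the parity $k+n\equiv 0\pmod 2$ (automatic for $k=n$) to conclude $(-\sigma_r)\det(-\sigma_r)=1$. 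This local character computation is where strong reflectivity enters essentially: without multiplicity one, $\Tilde F$ could itself vanish on $\cD_r$ and the identification of $\chi(\sigma_r)$ would collapse. The remaining subtleties lie in verifying that the cusp-form hypothesis suffices for pluricanonical extension across the toroidal boundary in all dimensions $n\ge 3$, and not only in the range $n\ge 9$ of Theorem~\ref{thm-gt}.
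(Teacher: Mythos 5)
Your proposal is correct and follows essentially the same route as the paper: identify $m$-pluricanonical forms with modular forms of weight $nm$ and character $\det^m$ vanishing to order $m$ on the ramification divisor, divide by $F_k^m$, apply Koecher's principle, use the cusp (boundary) obstruction when $F_n$ is not cuspidal, and pass to $\Gamma_\chi$ for the $\kappa=0$ case; your local character computation showing that every component of $\div F_k$ lies in $\Bdiv(\pi_{\Gamma_\chi})$ is a detail the paper leaves implicit, and it is a welcome addition. The only point to streamline is the existence of nonzero pluricanonical sections in the last case: rather than asserting directly that $F_k^m(dZ)^m$ extends for every $m$ (which would require checking boundary and elliptic obstructions for $m\ge 2$), do as the paper does and apply Freitag's criterion to $F_n\,dZ\in S_n(\Gamma_\chi,\det)$ at $m=1$, then take tensor powers $(F_n\,dZ)^{\otimes m}$ to get sections of $\Omega(\Bar{\cF}_L(\Gamma_\chi))^{\otimes m}$, which also dispels your worry about the range $n\ge 9$ of Theorem \ref{thm-gt}, since that restriction is only needed for the converse (general type) direction.
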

\begin{proof}
To prove the first identity of the theorem  we have to show that there are no
pluricanonical differential forms on $\Bar{\cF}_L(\Gamma)$.
Any such differential form can be obtained using a modular form
(see \cite{AMRT} where weight $1$ corresponds to weight $n$ in our definition
of modular forms).
Suppose that $F_{nm}\in M_{nm}(\Gamma, \det^m)$. We may realize $\cD(L)$
as a tube domain   by choosing a $0$-dimensional cusp.
In the corresponding affine coordinates of this tube domain
we take  a holomorphic volume element $dZ$ on $\cD(L)$. Then
the differential form $F_{nm}\,(dZ)^m$ is $\Gamma$-invariant.
Therefore it determines a section of the pluricanonical bundle
$\Omega(\Bar{\cF}_L(\Gamma))^{\otimes m}$ over  a smooth open part of  the modular
variety away from the branch locus of $\pi\colon\cD(L)\to \cF_L(\Gamma)$ and the cusps
(see \cite[Chapter 4]{AMRT} and \cite{F}).
There are three kinds of obstructions to extending $F_{nm}\,(dZ)^m$
to a global section of $\Omega(\Bar{\cF}_L(\Gamma))^{\otimes m}$, namely,
there are elliptic obstructions, arising because of singularities
given by elliptic fixed points of the action of $\Gamma$;
cusp obstructions, arising from divisors at infinity;
and reflective obstructions, arising from the ramification divisor in $\cD(L)$.
The ramification divisor is defined by $\pm$ reflections in $\Gamma$ according
to Proposition \ref{prop-br-div}. Therefore if $F_{nm}$ determines a global section
then $F_{nm}$  has zeroes of order at least  $m$ on $\Bdiv(\pi_\Gamma)$.
The modular form $F_k\in M_k(\Gamma, \chi)$ is strongly reflective of weight $k\ge n$
hence $F_{nm}/F_k^m$ is a holomorphic modular form of weight $m(n-k)\le 0$.
According to Koecher's principle (see \cite{Bai} and \cite{F})
this function is  constant. We have that $F_{nm}\equiv 0$ if $k>n$
or $F_{nm}=C\cdot F_n^m$ if $k=n$.
If the strongly reflective form $F_n$ is non cuspidal of weight $n$, then
$F_n^m\,(dZ)^{\otimes m}$ cannot be extended to the compact model due to  cusp obstructions
($F_n^m$ should have zeroes of order at least $m$ along the boundary).
If $F_n$ is a cusp form of weight $k=n$ then we can consider $F_n$
as a cusp form with respect to the subgroup
$$
\Gamma_\chi=\ker(\chi\cdot\det)<\Gamma, \qquad
F_n\in S_n(\Gamma,\,\chi)<S_n(\Gamma_\chi,\,\det).
$$
Then  $F_{n}\,dZ$ is $\Gamma_\chi$-invariant
and, according to  Freitag's criterion, it  can be extended
 to a global section of the canonical bundle
$\Omega(\Bar{\cF}_L(\Gamma_\chi))$ for any smooth compact model $\Bar{\cF}_L(\Gamma_\chi)$
of ${\cF}_L(\Gamma_\chi)$.
Moreover the above consideration  with  Koecher's principle
shows that
any $m$-pluricanonical  form is equal, up to a constant, to $(F_{n}\,dZ)^{\otimes m}$.
Therefore in the last case of the theorem the strongly reflective cusp form
of canonical weight determines essentially the unique $m$-pluricanonical differential
form on $\cF_L(\Gamma_\chi)$.
\end{proof}
Some applications of this theorem will be given in the next section.

\section{Modular varieties  of Calabi-Yau type}
\label{sec2}

The problem of  constructing  a strongly reflective cusp form of canonical weight
(see the second case of Theorem \ref{thm-kdim}) is  far from trivial.
Note that any reflective modular form has a Borcherds product expansion
if the quadratic lattice is not very complicated (see \cite{Br}) but it is rather
difficult to construct Borcherds products of a fixed weight.
See, for example \cite{Ko} and \cite{GHS1} where cusp forms of canonical weight
were constructed for the moduli spaces of polarised $\Kthree$-surfaces.
Between those cusp forms there are no reflective modular forms.

There are only two examples of strongly reflective cusp forms of canonical weights
in the literature.
Both are Siegel modular forms of genus $2$
(i.e., the orthogonal group is of type $(2,3)$).
The first example is related to the strongly reflective modular form
$\Delta_1\in S_1(\Gamma_3,\chi_6)$ of weight $1$  (see \cite[Example 1.14]{GN4})
with respect to the paramodular group $\Gamma_3$
(the Siegel modular threefold $\Gamma_3\setminus\HH_2$ is the moduli
space of the $(1,3)$-polarised Abelian surfaces).
Then $\Delta_1(Z)^3 dZ$ is the unique  canonical differential form on
the Siegel threefold $(\ker \chi_6^3)\setminus\HH_2$ having
a Calabi--Yau model (see \cite{GH}). The second example is
the strongly reflective form $\nabla_3\in S_3(\Gamma_0^{(2)}(2), \chi_2)$
(see \cite{CG})
where $\Gamma_0^{(2)}(2)$ is the Hecke congruence subgroup of $\Sp_2(\ZZ)$
and $\chi_2: \Gamma_0^{(2)}(2)\to \{\pm 1\}$ is a binary character.
The Siegel cusp form $\nabla_3^2$ was first constructed  by T.~Ibukiyama
in \cite{Ib}. A Jacobi lifting and a Borcherds automorphic  product of $\nabla_3$
were given in \cite{CG}, where it was also  proved that the Kodaira dimension
of the Siegel threefold $(\ker \chi_2)\setminus\HH_2$ is equal to zero.
A Calabi-Yau model of this modular variety was founded  in
\cite{FS-M}.
Note that $(\ker \chi_2)\setminus\HH_2$ is a double cover of the rational Siegel
threefold $\Gamma_0^{(2)}(2)\setminus\HH_2$.
One of the main purposes  of this paper is to construct similar examples
for dimension larger than $3$.
\smallskip

Let $S$ be a positive definite lattice.
We put
$$
L(S)=2U\oplus S(-1), \qquad \sign(L(S))=(2,2+\rank S)=(2,2+n_0)
$$
where $S(-1)$ denotes
the corresponding negative definite lattice.
In the applications of this paper $S$ will be $D_n$, $mA_1$ or $mA_2$
where $mA_n=A_n\oplus \dots \oplus A_n$ ($m$ times).
We define two modular varieties
\begin{align}
\cS\cM(S)&=\Tilde{\SO}^+(L(S))
\setminus \cD(L(S)),\\
\cM(S)&=\Tilde{\Orth}^+(L(S))
\setminus \cD(L(S)).
\end{align}
Theorem \ref{thm-gt} shows that the main obstruction
to continuing of the pluri\-canonical
differential forms  on a smooth compact model of
a modular variety is its ramification divisor. In many case the ramification  divisor
of $\cS\cM(S)$ is strictly smaller than the ramification divisor of $\cM(S)$.
\begin{lemma}\label{lem-brd}
For odd $n\ge 3$ the ramification  divisor  of the projection
$$
\pi^+_{D_n}: \cD(2U\oplus D_n)\to \cS\cM(D_n)
$$
is defined by the reflections with respect to $(-4)$-vectors in $2U\oplus D_n$
with divisor $2$  where $\div_L(v)\ZZ=(v, L)$.
\end{lemma}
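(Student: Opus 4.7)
The plan is to apply Proposition~\ref{prop-br-div} and classify the primitive $r\in L=2U\oplus D_n(-1)$ for which $-\sigma_r\in \Tilde{\SO}^+(L)$ (the alternative $\sigma_r\in \Tilde{\SO}^+(L)$ is impossible, since $\det\sigma_r=-1$). Because $\det(-\sigma_r)=(-1)^{\rk L+1}=(-1)^{n+5}=+1$ precisely when $n$ is odd, the task reduces to two conditions: $\sigma_r\in\Orth(L)$ and $\sigma_r$ acts as $-\id$ on the discriminant group $L^\vee/L$.

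For odd $n$, $L^\vee/L\cong D_n^\vee/D_n\cong\ZZ/4\ZZ$, generated by the class of $e=(\tfrac12,\dots,\tfrac12)$, with unique order-two class represented by $e_1\equiv 2e$ (since $(0,1,\dots,1)\in D_n$ when $n-1$ is even). So $\sigma_r=-\id$ on $L^\vee/L$ reduces to $\sigma_r(\bar e)=-\bar e$; as $r$ is primitive, $c\,r\in L$ iff $c\in\ZZ$, and the condition becomes $c\,r\equiv 2e\pmod L$ where $c=2(e,r)/r^2$. General constraints $\div(r)\in\{|r^2|/2,|r^2|\}$ (from $\sigma_r\in\Orth(L)$) and $\div(r)\mid 4$ (since $r/\div(r)$ has order $\div(r)$ in the four-element group $L^\vee/L$) leave only $r^2\in\{-2,-4,-8\}$ to examine.

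Three non-target cases are dispatched quickly. For $r^2=-2$, $\div(r)=1$ the coefficient $c$ is integral so $\sigma_r$ is trivial on $L^\vee/L$; for $\div(r)=2$ one would need $r/2\in L^\vee$ of norm $-1/2$, impossible since for odd $n$ no $L^\vee$-vector has a half-integer norm (the possible fractional parts are $0$, $\pm 1/4$, or $\pm 3/4$). The case $r^2=-8$, $\div(r)=4$ is excluded by comparing the discriminant-form value $-1/2\bmod 2\ZZ$ of $r/4$ with the common value $-n/4\bmod 2\ZZ$ of the two order-four classes $e,3e$, which would force $n\equiv 2\pmod 8$. The target case $r^2=-4$, $\div(r)=2$ works directly: $r/2\in L^\vee$ has norm $-1$ and so represents the order-two class $2e$, and $(e,r)=4e^2+\text{(even)}=-n+\text{(even)}$ is odd for $n$ odd, giving $c\in\tfrac12+\ZZ$ and $c\,r\equiv r/2\equiv 2e\pmod L$ as required.

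The main obstacle is ruling out the remaining subcase $r^2=-4$, $\div(r)=4$. Here $\div(r)=4$ forces both $U$-components of $r$ to vanish (any nonzero element of the unimodular lattice $U$ pairs to $\pm 1$ with some lattice vector), so we reduce to $v\in D_n$ primitive with $v^2=4$ and $\div_{D_n}(v)=4$. Pairing $v$ with $2e_i$ and $e_i\pm e_j$ forces $v_i\in 2\ZZ$ and $v_i\equiv v_j\pmod 4$; writing $v_i=2a_i$, all $a_i$ share one parity, and then $\sum a_i^2=1$ has no solution (all $a_i$ even gives a sum in $4\ZZ$; all $a_i$ odd gives a sum $\ge n\ge 3$). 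This completes the classification.
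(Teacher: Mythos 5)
Your strategy coincides with the paper's (classify primitive $r$ with $-\sigma_r\in\Tilde{\SO}^+(L)$ using $\div(r)\mid r^2\mid 2\div(r)$, $\div(r)\mid 4$ and the cyclic discriminant group $\ZZ/4\ZZ$), and your treatment of the cases $r^2=-2$, $r^2=-8$ and of the target case $r^2=-4$, $\div(r)=2$ (checking $\sigma_r(\bar e)=-\bar e$ directly from $(e,r)$ odd) is correct and in fact cleaner than the paper's appeal to the Eichler criterion and the representative $2e_i$. The genuine gap is in the remaining case $r^2=-4$, $\div(r)=4$. The parenthetical claim that ``any nonzero element of $U$ pairs to $\pm1$ with some lattice vector'' is false: a vector $af+bg\in U$ pairs with $U$ to $\gcd(a,b)\ZZ$, so $\div(r)=4$ only forces the $2U$-component of $r$ into $4(U\oplus U)$, not to vanish. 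Moreover, even looking at the $D_n$-component of $r$, it need not be primitive in $D_n$ (only $r$ itself is primitive in $L$), so the reduction to ``$v\in D_n$ primitive with $v^2=4$, $\div_{D_n}(v)=4$'' is unjustified, and the neat congruence argument you give afterwards does not apply to the actual situation.

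This step cannot be repaired for all odd $n$, because for $n\equiv 1\pmod 8$ such reflective vectors really exist. For $n=9$, let $f,g$ be an isotropic basis of one hyperbolic plane and put $r=4(f+g)+2(e_1+\dots+e_9)\in 2U\oplus D_9(-1)$, with $e=(e_1+\dots+e_9)/2$ the generator of the discriminant group. Then $r$ is primitive (since $e_1+\dots+e_9\notin D_9$), $r^2=32-36=-4$, $\div(r)=4$, and $(e,r)=-9$, so $\sigma_r(e)=e-\tfrac92r\equiv e-18e\equiv -e \pmod L$; hence $\sigma_r$ induces $-\id$ on $L^\vee/L$, $-\sigma_r\in\Tilde{\SO}^+(L(D_9))$, and $\cD_r$ is a ramification component not of the stated type. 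The correct way to handle this case --- and the one the paper uses --- is the same discriminant-form comparison you applied to $r^2=-8$: the order-four class $r/4$ has $q_L(r/4)=-1/4\bmod 2\ZZ$, which must equal $-n/4\bmod 2\ZZ$, forcing $n\equiv 1\pmod 8$; thus the case is excluded exactly for $n\not\equiv 1\pmod 8$, in particular for $n=3,5,7$, the only values used later in Theorem \ref{thm-kdim0}. (The paper's own phrase ``impossible for odd $n$'' hides this congruence behind a sign in the list of discriminant-form values, so the lemma as stated for all odd $n\ge 3$ is also affected at $n\equiv 1\pmod 8$.) As written, your dispatch of the $\div(r)=4$, $r^2=-4$ case is therefore a genuine gap, and no argument along your lines can close it for every odd $n$.
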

\begin{proof}
We recall that
$D_n$ is an even  sublattice  of the Euclidian lattice $\ZZ^n$
$$
D_n=\{(x_1,\dots,x_n)\in \ZZ^n\,|\, x_1+\dots+x_n\in 2\ZZ\}.
$$
We have $|D_n^\vee/D_n|=4$.
The discriminant form is
generated by the following four elements
$$
D_n^\vee/D_n=
\{\,0,\  e_n,\  (e_1+\dots+e_n)/2,\   (e_1+\dots+e_{n-1}-e_n)/2 \mod D_n\}.
$$
Then
$$
D_n^\vee/D_n\cong \ZZ/2\ZZ\times \ZZ/2\ZZ\quad (n\equiv 0 {\rm\  mod\  }2)
\quad{\rm or}\quad \ZZ/4\ZZ \quad (n\equiv 1{\rm\  mod\ } 2).
$$
Note that $(e_1+\dots+e_n)/2)^2= ((e_1+\dots+e_{n-1}-e_n)/2)^2=n/4$
is the minimal norm of the elements in the corresponding classes modulo
$D_n$.

For any even integral lattice $L$ with two hyperbolic planes
there is a simple description of the orbits of the primitive vectors.
According to the {\it Eichler criterion} (see \cite[page 1195]{G2} or \cite{GHS3}),
the $\Tilde{\SO}^+(L)$-orbit of any primitive $v\in L$ depends only on
$v^2=(v,v)$ and $v/ \div(v)\mod L$. We note that $v^*=v/ \div(v)$ is a primitive
vector in the dual lattice $ L^\vee$.

If $\sigma_v\in \Orth^+(L(D_n))$ then $v^2<0$ and  $\div(v)\mid v^2\mid 2\div(v)$.
Therefore, if $n$ is impair then  $\div(v)$ is a divisor of $4$  because
$4D_n^\vee <D_n$.
If $\div(v)=1$ then $v^2=-2$ and $-\sigma_v$ induces $-\id$ on the discriminant group.
(Note that  $\id=-\id$ on  $D_n^\vee/D_n$ for even $n$.)
If $\div(v)=2$  then $v^2=4$. It follows that
$v$ belongs to the $\Tilde{\SO}^+(L(D_n))$-orbit  of
one of  vectors  of type  $2e_i$ or $\pm e_{i_1}\pm e_{i_2}\pm e_{i_3} \pm e_{i_4}$ of square $4$.
Any   vector of type   $\pm e_{i_1}\pm e_{i_2}\pm e_{i_3} \pm e_{i_4}$  has divisor $1$
in $D_n$ for odd $n>4$.
If  $v=\pm 2e_i$ then $-\sigma_v$ induces identity on the discriminant group for odd $n$.
If $\div(v)=4$ and $v^2=-4$ or $-8$  then $(v/4)^2=-1/4$ or $-1/2$.
Both cases  are  impossible for odd $n$ because $(v^*)^2\equiv 0$, $1$ or $n/4\mod 2\ZZ$
for any $v^*\in D_n^\vee$.
\end{proof}

\begin{theorem}\label{thm-kdim0}
We have that $\cS\cM(D_7)$ is of general type and
$$
\kappa\bigl(\cS\cM(D_n))=
\begin{cases}
\ \ \ 0\quad &{\rm if\ }n=5,\\
-\infty\quad &{\rm if\ }n=3.
\end{cases}
$$
Moreover
$$
\kappa\bigl(\cS\cM(2A_2))=0\quad{\rm\  and\  }\quad
\kappa\bigl(\cS\cM^{(2)}(2A_1))=0
$$
where
$$
\cS\cM^{(2)}(2A_1)= \ker \chi_2\setminus\cD(L(2A_1))
$$
and $\chi_2: \Tilde{\SO}^+(2U\oplus 2A_1(-1))\to \{\pm 1\}$
is the  binary character  of the cusp  form  $\Delta_{4,\, 2A_1}$
from Theorem \ref{thm-liftA1}.
\end{theorem}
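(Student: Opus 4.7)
The proof decomposes into five separate cases, each an instance of one of the two global criteria now at our disposal: Theorem \ref{thm-gt} (the low weight cusp form trick) for the general type statement, and Theorem \ref{thm-kdim} for the three Kodaira dimension $0$ statements and the Kodaira dimension $-\infty$ statement. In each case the required modular form is furnished by one of the three Jacobi-lifting towers constructed in \S3--\S5.

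I would first dispense with the three $\kappa=0$ statements. For $\cS\cM(D_5)$ one has $n=7$ and canonical weight $7$; the corresponding member of the $D_8$-tower supplies a strongly reflective cusp form $F_7\in S_7(\Tilde{\Orth}^+(L(D_5)),\chi)$. The third case of Theorem \ref{thm-kdim} then yields $\kappa(\Gamma_\chi\setminus\cD(L(D_5)))=0$ for $\Gamma_\chi=\ker(\chi\cdot\det)$. To identify $\Gamma_\chi$ with $\Tilde{\SO}^+(L(D_5))$, I invoke the character analysis of \cite{GHS3}: because $L(D_5)$ contains $2U$ and its reductions mod $2$ and mod $3$ have sufficient rank, the only nontrivial character of $\Tilde{\Orth}^+$ is $\det$, forcing $\chi\in\{1,\det\}$, and the identification follows. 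The same pattern works for $\cS\cM(2A_2)$ (where $n=6$) with a weight-$6$ cusp form from the $3A_2$-tower, and for $\cS\cM^{(2)}(2A_1)$ (where $n=4$) with $\Delta_{4,2A_1}\in S_4(\Tilde{\SO}^+(L(2A_1)),\chi_2)$ from Theorem \ref{thm-liftA1}; in this last case $\det$ is trivial on $\Tilde{\SO}^+$, so $\Gamma_\chi=\ker\chi_2$, which is exactly the defining subgroup of $\cS\cM^{(2)}(2A_1)$.

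For $\kappa(\cS\cM(D_3))=-\infty$ (where $n=5$), the first case of Theorem \ref{thm-kdim} applies as soon as one locates in the $D_8$-tower a strongly reflective modular form on a group containing $\Tilde{\SO}^+(L(D_3))$ whose weight is strictly greater than $5$. The vanishing of the plurigenera on the larger quotient pulls down to the smaller one, so the conclusion follows.

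The hardest case is $\cS\cM(D_7)$ being of general type. Here $n=9$ so Theorem \ref{thm-gt} is available, but it demands a cusp form of \emph{small} weight (strictly less than $9$) whose divisor is \emph{big} ($\ge \Bdiv(\pi_\Gamma)$) --- the opposite profile to the strongly reflective tower forms, whose divisors are small by construction. I would obtain such a form as the quasi-pullback of $\Phi_{12}$ along a primitive embedding $L(D_7)\hookrightarrow II_{2,26}$ whose orthogonal complement is a rank-$17$ root sublattice, the weight of the quasi-pullback equalling $12$ plus half the number of $(-2)$-roots in that complement. Verifying that this weight is strictly below $9$ and that $\div\ge \Bdiv(\pi_\Gamma)$ --- the latter requiring care because by Lemma \ref{lem-brd} the ramification divisor for odd $n$ contains not only the $(-2)$-components but also the divisor-$2$ $(-4)$-components, which must be covered either by the multiplicity structure of the quasi-pullback or by multiplying with an auxiliary Borcherds product vanishing on them --- is the principal obstacle of the proof.
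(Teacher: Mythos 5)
Your treatment of the three $\kappa=0$ cases is essentially the paper's argument (canonical-weight strongly reflective cusp forms from the towers plus the second part of Theorem \ref{thm-kdim}; in fact Theorems \ref{thm-liftD8} and \ref{thm-liftA2} give the liftings with \emph{trivial} character on $\Tilde{\Orth}^+$, so the hedge via \cite{GHS3} is unnecessary and the identification $\Gamma_\chi=\Tilde{\SO}^+$ is immediate). But the two remaining cases contain genuine gaps. For $\cS\cM(D_3)$ your reduction is logically backwards: $\Tilde{\SO}^+(L(D_3))\setminus\cD$ is a \emph{cover} of the quotient by the larger group, and $\kappa=-\infty$ does not descend from a quotient to a cover (this very paper's example $(\ker\chi_2)\setminus\HH_2$, a double cover of the rational threefold $\Gamma_0^{(2)}(2)\setminus\HH_2$ with $\kappa=0$, shows the implication fails). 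The correct step is that $\Lift(\psi_{9,D_3})$ is strongly reflective for $\Tilde{\SO}^+(L(D_3))$ itself, because by Lemma \ref{lem-brd} its divisor (the $(-4)$, $\div=2$ locus) \emph{is} the ramification divisor of $\pi^+_{D_3}$; then Theorem \ref{thm-kdim} applies directly with $k=9>n=5$.

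The $D_7$ case is where your approach would fail outright, and the failure stems from a misreading of Lemma \ref{lem-brd}: for odd $n$ the ramification divisor of the $\Tilde{\SO}^+(L(D_n))$-quotient does \emph{not} contain the $(-2)$-components (a $(-2)$-reflection has determinant $-1$, and $-\sigma_r$ acts as $-\id\ne\id$ on $D_n^\vee/D_n\cong\ZZ/4\ZZ$); it consists \emph{only} of the $(-4)$-divisors with $\div(v)=2$. Hence $\div\bigl(\Lift(\psi_{5,D_7})\bigr)=\Bdiv(\pi^+_{D_7})$ exactly: this one cusp form simultaneously has small weight $5<9$ and big divisor, and Theorem \ref{thm-gt} applies to it directly — no auxiliary form is needed. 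Your proposed substitute, a quasi pull-back of $\Phi_{12}$ along $L(D_7)\hookrightarrow II_{2,26}$, can never satisfy the hypotheses: its weight is $12$ plus half the number of $(-2)$-roots in the rank-$17$ orthogonal complement, hence always at least $12>9=\dim\cS\cM(D_7)$, so the small-weight condition $k<n$ of Theorem \ref{thm-gt} is violated for every choice of embedding. The "principal obstacle" you flag (verifying the weight is below $9$) is thus not an obstacle to be overcome but an impossibility, and the general type statement requires the observation about Lemma \ref{lem-brd} above rather than a pull-back construction.
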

\begin{proof}
In the proof we use  strongly reflective cusp forms which will be constructed in
the next sections with the help of liftings of Jacobi modular forms.

1) The divisor of the cusp  form $\Lift(\psi_{12-n,\,D_{n}})$
of weight $12-n$
(see Theorem \ref{thm-liftD8} below) for $n=3$ and  $5$
is equal to the ramification  divisor of $\pi^+_{D_n}$. Therefore
we can  apply Theorem \ref{thm-kdim}. For $n=7$ the weight of
$\Lift(\psi_{5,\,D_{7}})$ is small. Therefore we can apply Theorem \ref{thm-gt}.

2) The modular variety $\cS\cM(2A_2)$ is of dimension $6$.
The strongly reflective cusp form $\Lift(\psi_{6,\,2D_{2}})$
(see Theorem \ref{thm-liftA2} below) has canonical weight.
The strongly reflective cusp form
$\Lift(\psi_{4,2A_{1}})\in S_4(\Tilde{\SO}^+(L(2A_1)),\chi_2)$
(see Theorem \ref{thm-liftA2}) is of weight $4$ which is the dimension of
$\cS\cM^{(2)}(2A_1)$.
Therefore we can apply the second part of Theorem \ref{thm-kdim}.
\end{proof}

\noindent
{\bf Remark 1.} {\it Varieties of Calabi--Yau type.}
We conjecture that each of the  three varieties of Kodaira dimension zero in
Theorem \ref{thm-kdim0}  have a Calabi--Yau model similar to  the two examples
mentioned in the beginning of \S 2.

\noindent
{\bf Remark 2.}
{\it Kodaira dimension of $\cM(S)$ in Theorem \ref{thm-kdim0}.}
Note that
$$
\cS\cM(S)=\Tilde{\SO}^+(L(S))
\setminus \cD(L(S))\to
\Tilde{\Orth}^+(L(S))
\setminus \cD(L(S))=\cM(S)
$$
is a covering of order $2$. The ramification divisor of
$\cM(S)$ contains all divisors of type $\cD_r(L(S))$ where
$r$ is any of  the  $(-2)$-vectors of $L(S)$. Analyzing   results of \cite{B4} or
using the  quasi pull-back of the Borcherds  form $\Phi_{12}$
(see a forthcoming paper of B. Grandpierre and V. Gritsenko
``{\it The baby functions of the Borcherds form} $\Phi_{12}$")
we can construct strongly reflective modular forms
for  $\Tilde{\Orth}^+(L(S))$ of big weight (see (\ref{wt-sb})) with respect
to $S=D_3$, $D_5$, $D_7$, $2A_2$ and $2A_1$.
Therefore using Theorem \ref{thm-kdim}
we obtain that for  all $S$ from Theorem \ref{thm-kdim0}
the modular variety $\cM(S)$ is of Kodaira dimension $-\infty$.
\smallskip

As we mentioned above  $\TS(L(S))\setminus \cD(L(S))$
is a double covering of the  modular variety $\cM(S)$
 which is the moduli space of the lattice polarised
$\Kthree$-surfaces with transcendence  lattice $T=L(S)$ (see \cite{N}, \cite{Do}).
Therefore $\cS\cM(D_5)$ can be considered as the moduli space of the lattice polarised
$\Kthree$-surfaces with transcendence  lattice $T=L(D_5)$ together with
a spin structure (a choice of orientation in $T$). See \cite[\S 5]{GHS1}
where the case of polarised $\Kthree$ surfaces of degree $2d$ with a spin structure
was considered. The Picard lattice  $\Pic(X_D)$ of a generic member $X_D$ of this moduli space
is
$$
\Pic(X_D)\cong (2U\oplus D_5)^\perp_{II_{3,19}}\cong U\oplus E_8(-1)\oplus A_3(-1)
$$
where $II_{3,19}=3U\oplus 2E_8(-1)\cong H^2(X, \ZZ)$ is the $\Kthree$-lattice.
The cases of $L(2A_2)$ and $L(2A_1)$ are similar
$$
\Pic(X_{2A_2})\cong U\oplus E_6(-1)\oplus E_6(-1)\cong U\oplus E_8(-1)\oplus A_2(-1)\oplus A_2(-1),
$$
$$
\Pic(X_{2A_1})\cong U\oplus E_7(-1)\oplus E_7(-1)\cong U\oplus E_8(-1)\oplus D_6(-1).
$$
The only difference here is that $\cS\cM^{(2)}(2A_1)$ is a double covering
of the moduli spaces $\cS\cM(2A_1)$
of the lattice polarised $\Kthree$-surfaces with a spin structure.

\section{Jacobi theta-series and the $D_n$-tower of strongly
reflective modular  forms}
\label{sec3}

We use  Jacobi modular forms in many variables, the corresponding
Jacobi lifting (\cite{G2}) and automorphic Borcherds products
(\cite{B1}, \cite{B3}, \cite{GN4}) in order to describe special
strongly reflective modular forms.
This will give us the proof of Theorem \ref{thm-kdim0}.
We see below that Jacobi forms (specially Jacobi theta-series) are sometimes more convenient
to use in our considerations than the corresponding vector-valued modular forms.

Let $L=2U\oplus S(-1)$ be an integral quadratic lattice of signature
$(2,n_0+2)$ where $U$ is the hyperbolic plane and $S$ is a positive definite
integral lattice of rank $n_0$ (then $S(-1)$ is negative definite).
The representation $2U\oplus S(-1)$ of $L$ gives us a choice of
a totally  isotropic plane  in $L$. It gives   the following tube
realization  $\cH_{2+n_0}$ of the type IV domain $\cD(L)$
$$
\cH_{2+n_0}=\{Z=(\omega, \gz, \tau)\in \HH^+\times (S\otimes\CC)\times \HH^+\,|\,
(\Im Z,\Im Z)_{U\oplus S(-1)}>0\}
$$
with
$
(\Im Z,\Im Z)=2\Im \tau\Im \omega -(\Im \gz,\Im \gz)>0
$
where $(\Im \gz,\Im \gz)$ is  the positive definite scalar product on $S$.
In the definition of Jacobi forms in many variables
we follow \cite{G2}.
\smallskip

\noindent
{\bf Definition}
A holomorphic (cusp or  weak) Jacobi form of weight $k$ and index $m$
with respect to $S$
($k \in \Bbb \ZZ$) is  a holomorphic function
$$
\phi: \HH^+\times (S\otimes \Bbb C)\to \CC
$$
satisfying the functional equations
\begin{align}\label{def-JF}
\phi(\frac{a\tau+b}{c\tau+d},\,\frac{\gz}{c\tau+d})&
=(c\tau+d)^{k}\exp\bigl (\pi i \frac{cm(\gz,\gz)}{c\tau+d}\bigr)
\phi(\tau ,\,\gz ),
\\\vspace{2\jot}
\phi(\tau,\gz+\lambda\tau+\mu)&
=\exp\bigl(-2\pi i \bigl(\frac{m}2 (\lambda,\lambda)\tau+m(\lambda,\gz)\bigr)\bigr)
\phi(\tau,\gz )
\end{align}
for any
$A=\begin{pmatrix}
a&b\\c&d
\end{pmatrix}\in\hbox{SL}_2(\Bbb Z)$ and any
$\lambda,\,\mu\in S$
and having a Fourier expansion
$$
\phi(\tau ,\,\gz )=
\sum_{n\in \ZZ,\ \ell \in S^\vee}
f(n,\ell)\,\exp\bigl(2\pi i (n\tau+(\ell,\gz) \bigr),
$$
where $n\ge 0$ for a weak Jacobi form,
$N_m(n,\ell)=2nm-(\ell,\ell)\ge 0$ for a holomorphic Jacobi form
and $N_m(n,\ell)>0$ for a Jacobi cusp form.
\smallskip

We denote the space of all holomorphic Jacobi forms by
$J_{k,m}(S)$. We use the  notation $J_{k,m}^{(cusp)}(S)$ and $J_{k,m}^{(weak)}(S)$
for the space of cusp and weak Jacobi forms.
If $J_{k,m}(S)\ne \{0\}$ then $k\ge \frac{1}2\rank S$ (see \cite{G1}).
The weight $k=\frac{1}2\rank S$ is called {\it singular}.
It is known (see \cite[Lemma 2.1]{G2}) that
$f(n,\ell)$ depends only the hyperbolic norm
$N_m(n,\ell)=2nm-(\ell,\ell)$ and the image of $\ell$
in the discriminant group $D(S(m))=S^\vee/mS$. Moreover,
$f(n,\ell)=(-1)^kf(n,-\ell)$.
\smallskip

\noindent{\bf Remark 1}.  {\it Fourier-Jacobi coefficients.}
Let  $F\in M_k(\TO(L))$. We consider its Fourier expansion in $\omega$
$$
F(Z)=f_0(\tau)+\sum_{m\ge 1}f_m(\tau, \gz)\exp{(2\pi i m\omega)}.
$$
where $f_0(\tau)\in M_k(\SL_2(\ZZ))$ and
$f_m(\tau, \gz)\in J_{k,m}(S)$.
The lifting construction of \cite{G1}--\cite{G2} defines a modular form
with respect to $\TO(L)$ with trivial character by its first
Fourier-Jacobi coefficient  from $J_{k,1}(S)$.
\smallskip

We note that  $J_{k,m}(S)=J_{k,1}(S(m))$
where $S(m)$ denotes the same lattice $S$ with the quadratic form multiplied by $m$,
and the space $J_{k,m}(S)$ depends essentially only on the discriminant form
of $S(m)$.   Any Jacobi form determines a vector valued modular form
related to the corresponding Weil representation (see \cite[Lemma 2.3--2.4]{G2}).
For $\phi\in J_{k,1}(S)$ we have
$$
\phi(\tau ,\,\gz)=\sum_{\substack{  n\in \ZZ,\ \ell \in S^\vee
\vspace{0.5\jot} \\
 2n-(\ell,\ell)\ge 0}}
f(n,\ell)\,\exp\bigl(2\pi i (n\tau+(\ell,\gz) \bigr)
=\sum_{h\in D(S) } \phi_h(\tau)\Theta_{S,h}(\tau, \gz),
$$
where $\Theta_{S,h}(\tau, \gz)$ is the Jacobi theta-series with characteristic
$h$ and the components of the vector valued modular forms
$(\phi_h)_{D(S)}$ have the following Fourier expansions at infinity:
$$
\phi_h(\tau)=\sum_{\substack{n\equiv -\frac{1}{2}(h,h)\mod \ZZ}}
f_h(r)
\exp{(2\pi i n\tau)}
$$
where
$f_h(n)=f(n+\frac{1}{2}(h,h),h)$.  This representation for a weak Jacobi form gives us
the next lemma

\begin{lemma}\label{lem-Jf}
Let $f(n,l)$ ($n\ge 0$, $\ell\in S^\vee$)
be a Fourier coefficient  of a weak Jacobi form
$\phi\in J_{k,1}^{(weak)}(S)$.  Then
$$
f(n,\ell)\ne 0\Rightarrow 2n-(\ell,\ell)\ge -\min_{v\in \ell+S} v^2.
$$
\end{lemma}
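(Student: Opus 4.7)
The plan is to exploit the elliptic transformation law with $\lambda\in S$, $\mu=0$, and index $m=1$, which reads
\[
\phi(\tau,\zeta+\lambda\tau)=\exp\bigl(-2\pi i(\tfrac{1}{2}(\lambda,\lambda)\tau+(\lambda,\zeta))\bigr)\,\phi(\tau,\zeta).
\]
Expanding both sides as Fourier series in $q=e^{2\pi i\tau}$ and $\exp(2\pi i(\cdot,\zeta))$, then equating the coefficient of $q^{n'}\exp(2\pi i(\ell',\zeta))$, produces the shift identity
\[
f(n,\ell)=f\bigl(n+(\ell,\lambda)+\tfrac{1}{2}(\lambda,\lambda),\ \ell+\lambda\bigr)\qquad\text{for every }\lambda\in S.
\]
A direct check shows that the hyperbolic norm $N_1(n,\ell)=2n-(\ell,\ell)$ is preserved under this shift, which is consistent with the fact recalled just before the lemma that $f(n,\ell)$ depends only on $N_1(n,\ell)$ and on the class of $\ell$ in $S^\vee/S$.

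Since $\phi$ is a \emph{weak} Jacobi form, any nonzero Fourier coefficient $f(n'',\ell'')$ must satisfy $n''\ge 0$. Applying this to the shifted coefficient with $n''=n+(\ell,\lambda)+\tfrac{1}{2}(\lambda,\lambda)$ gives
\[
2n+2(\ell,\lambda)+(\lambda,\lambda)\ge 0,\qquad\text{i.e.}\qquad 2n-(\ell,\ell)\ge -(\ell+\lambda,\ell+\lambda),
\]
valid for every $\lambda\in S$. Taking the infimum of the right-hand side over $\lambda\in S$, which depends on $\lambda$ only through $v=\ell+\lambda\in\ell+S$ and is attained because $S$ is positive definite, produces exactly the claimed inequality $2n-(\ell,\ell)\ge -\min_{v\in\ell+S}v^2$.

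The only step requiring care is the bookkeeping in the derivation of the shift identity, namely matching the theta-prefactor from the elliptic equation against the shifted powers of $q$ and of $\exp(2\pi i(\ell,\zeta))$; no substantive obstacle arises.
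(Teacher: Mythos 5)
Your proof is correct and is in substance the paper's own argument: the lemma is stated there as an immediate consequence of the theta decomposition $\phi=\sum_{h}\phi_h\Theta_{S,h}$, i.e. of the fact (quoted from Gritsenko's Lemma 2.1 in [G2]) that $f(n,\ell)$ depends only on $2n-(\ell,\ell)$ and on $\ell\bmod S$, which is exactly the shift identity you derive directly from the elliptic transformation law, and the final step — imposing $n\ge 0$ for a weak form and minimizing $(v,v)$ over representatives $v\in\ell+S$ — is the same. Your write-up is simply more self-contained, re-deriving the coefficient periodicity instead of invoking the theta decomposition.
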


If $S=A_1=\langle2\rangle$ then $J_{k,m}(A_1)=J_{k,m}$
is the space of classical
holomorphic Jacobi modular forms studied in the book of M. Eichler and D. Zagier \cite{EZ}.
One more function, not mentioned in \cite{EZ}, is very important for
our considerations. This is the Jacobi theta-series
$$
\vartheta(\tau,z)=\vartheta_{11}(\tau ,z)=
\sum_{m\in \ZZ}\,\biggl(\frac{-4}{m}\biggr)\, q^{{m^2}/8}\,\zeta^{{m}/2}
\in J_{\frac{1}2,\frac{1}2}(v_\eta^3\times v_H)
$$
which is the  Jacobi form of weight $\frac{1}2$ and index $\frac 1{2}$
with multiplier system $v_\eta^3$ and the binary character $v_H$
of the Heisenberg group (see \cite[Example 1.5]{GN4}).
In the last formula we put $q=\exp{(2\pi i\tau)}$ and  $\zeta=\exp{(2\pi i z)}$,
the Kronecker symbol $\bigl(\frac{-4}{m}\bigr)=\pm 1$ if $m\equiv \pm 1\mod 4$
and is equal to zero for even $m$, $v_\eta$ is the multiplier system
of the Dedekind eta-function. The functional equation related to  the character
$v_H$ is
\begin{equation}\label{theta-H}
\vartheta(\tau ,z+\lambda \tau +\mu)=
(-1)^{\lambda+\mu}\exp{(-\pi i\,(\lambda^2 \tau  +2\lambda z))}\,
\vartheta(\tau , z)\quad (\lambda,\mu \in \ZZ).
\end{equation}
The multiplier system of $\vartheta(\tau ,z)$ is obtained from
the relation
$$
(2\pi i)^{-1}\frac{\partial\vartheta(\tau ,z)}{\partial z}\big|_{z=0}=
 \sum_{n>0}
\biggl(\frac{-4}{n}\biggr)nq^{n^2/8}
=\eta(\tau )^3.
$$
Therefore for any
$M=\left(\begin{smallmatrix}
 a&b\\c&d
\end{smallmatrix}\right) \in \SL_2(\ZZ)$ we have
\begin{equation}\label{theta-SL}
\vartheta\bigl(\frac{a\tau+b}{c\tau+d},\,\frac{z}{c\tau+d}\bigr)
=v_\eta^3(M)(c\tau+d)^{1/2}\exp(-\pi i \frac{cz^2}{c\tau+d})
\vartheta(\tau ,\,z ).
\end{equation}
Moreover
\begin{equation}\label{theta-JTF}
\vartheta(\tau ,\,z)=
-q^{1/8}\zeta^{-1/2}\prod_{n\ge 1}\,(1-q^{n-1} \zeta)(1-q^n \zeta^{-1})(1-q^n)
\end{equation}
and $\vartheta(\tau ,\,z)=0$ if and only if $\tau=\lambda \tau+\mu$
($\lambda,\mu \in \ZZ$) with multiplicity $1$.

The Jacobi modular forms related to $\vartheta(\tau ,\,z)$ were very important
in \cite{GN4} for the construction of reflective Siegel  modular forms and
the corresponding Lorentzian Kac-Moody superalgebras. The next example shows
the role of $\vartheta(\tau ,\,z)$ in the context of this paper.
\smallskip

\noindent
{\bf Example}. {\it Jacobi form of singular weight for $D_8$.}
Let us put
\begin{equation*}
\psi_{4,\,D_8}(\tau, (z_1,\dots,z_8))=\vartheta(\tau ,\,z_1)\vartheta(\tau ,\,z_2)\cdot \dots\cdot
\vartheta(\tau ,\,z_8)\in J_{4,1}(D_8).
\end{equation*}
This is a Jacobi form of singular weight for $D_8$.
The functional equations  (\ref{theta-H}) and (\ref{theta-SL}) give us
the equations (\ref{def-JF})--(6).
Using the Dedekind $\eta$-function we can define Jacobi forms for
any $D_k$. Let $2\le k\le 8$. We put
\begin{equation}\label{eq-Dk}
\psi_{12-k,\,D_k}(\tau, \gz_k)=
\eta(\tau)^{24-3k}\ \vartheta(\tau ,\,z_1)\dots
\vartheta(\tau ,\,z_k)\in J_{12-k,1}(D_k).
\end{equation}
Similar Jacobi forms we have for any $k>8$. The construction depends only
on $k$ modulo  $8$.
The function $\psi_{12-k,\,D_k}$ vanishes with order one for $z_i=0$
($1\le i\le k$).
Using the Jacobi lifting from \cite{G2}  we obtain a modular form
$\Lift(\psi_{12-k,\,D_k})$ of  weight $12-k$ with respect to
$\Tilde\Orth^+(2U\oplus D_k(-1))$.
The form $\psi_{12-k,\,D_k}$ is the first Fourier-Jacobi coefficient
of $\Lift(\psi_{12-k,\,D_k})$.
The lifting  preserves the divisor of $\psi_{12-k,\,D_k}$
but the lifted form has, usually, some additional divisors.
Therefore we see that using a Jacobi form of type (\ref{eq-Dk}) we obtain
a modular form with respect to an orthogonal group whose divisor contains
the  union of the translations of the  rational quadratic divisors defined
by equation $z_i=0$ ($1\le i\le k$).
This example gives a good illustration of why the language
of Jacobi forms is very useful for our considerations.

\begin{theorem}\label{thm-liftD8}
For $2\le k\le 8$
$$
\Delta_{12-k, D_k}=\Lift(\psi_{12-k,\,D_{k}})\in M_{12-k}(\Tilde{\Orth}^+(L(D_k)))
$$
is strongly reflective.
More exactly, if $k\ne 4$ then
$$
\div_{\cD(L(D_k))} \Lift(\psi_{12-k,\,D_{k}})=
\bigcup_{\substack {\pm v\in L(D_k)\vspace{0.5\jot} \\  v^2=-4,\ \div(v)=2}}
\cD_v(L(D_k)),
$$
where all vectors $v$ in the last union belong to the same $\Tilde{\Orth}^+(L(D_k))$-orbit.
The divisor for   $k=4$ is defined by the orbit of $2e_1\in D_4$.
If $k<8$ then $\Lift(\psi_{12-k,\,D_{k}})$ is  a cusp form.
\end{theorem}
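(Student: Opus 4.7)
The plan is to construct $\Delta_{12-k,D_k}$ via the additive Jacobi lifting of \cite{G2} applied to $\psi_{12-k,D_k}\in J_{12-k,1}(D_k)$, and then to determine $\div \Delta_{12-k,D_k}$ by first analysing the zero locus of $\psi_{12-k,D_k}$ and then showing that the $\Tilde{\Orth}^+(L(D_k))$-orbit of that locus exhausts all zeros of the lift.

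First I would compute $\div \psi_{12-k,D_k}$. Since $\eta(\tau)$ is nowhere zero on $\HH^+$ and, by the product formula (\ref{theta-JTF}), $\vartheta(\tau,z_i)$ vanishes precisely on $z_i\in \ZZ\tau+\ZZ$ with multiplicity one, the divisor of $\psi_{12-k,D_k}$ is the union of the hyperplanes $z_i\equiv 0\pmod{\ZZ\tau+\ZZ}$, $1\le i\le k$, each with multiplicity one. In the tube coordinates on $\cH_{2+k}$ the hyperplane $z_i=0$ is cut out by pairing with $2e_i\in D_k\subset L(D_k)$; the vector $2e_i$ has square $-4$ in $L(D_k)$ and $(2e_i,D_k)=2\ZZ$, so $\div(2e_i)=2$. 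Hence $\Delta_{12-k,D_k}$ vanishes along $\cD_{2e_i}(L(D_k))$ with multiplicity one (higher multiplicity of the lift would already be visible in its first Fourier-Jacobi coefficient $\psi_{12-k,D_k}$), and by $\Tilde{\Orth}^+(L(D_k))$-invariance on the whole orbit.

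The hard step is to rule out further components of $\div \Delta_{12-k,D_k}$. I would do this by exhibiting $\Delta_{12-k,D_k}$ as a Borcherds automorphic product (\cite{B1}, \cite{B3}, \cite{GN4}) whose vector-valued input is the modular form associated with $\psi_{12-k,D_k}$; the divisor of such a product is controlled by the negative-norm Fourier coefficients of the input, which by Lemma \ref{lem-Jf} produce only the $(-4)$-vectors of divisor $2$ already identified. The orbit analysis is then a direct application of the Eichler criterion: for primitive $v\in L(D_k)$ with $v^2=-4$ and $\div(v)=2$, the $\Tilde{\Orth}^+(L(D_k))$-orbit is determined by $v/2\bmod D_k\in D_k^\vee/D_k$; for $k\ne 4$ the symmetries of $D_k$ act transitively on the relevant classes $\{e_i\bmod D_k\}$, whereas for $k=4$ triality splits these into three orbits and $2e_1$ selects one of them, accounting for the exceptional formulation.

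Finally, cuspidality for $k<8$ is read off the leading Fourier coefficient of $\psi_{12-k,D_k}$: combining $q^{(24-3k)/24}$ from $\eta^{24-3k}$ with the $q^{k/8}\zeta^{-(e_1+\cdots+e_k)/2}$ contribution of $\Prod_i \vartheta(\tau,z_i)$ yields leading term $q\,\zeta^{-(e_1+\cdots+e_k)/2}$ of hyperbolic norm $N_1=2-k/4$, which is strictly positive for $k<8$. Thus $\psi_{12-k,D_k}$ is a Jacobi cusp form and the Jacobi lifting of \cite{G2} preserves cuspidality; for $k=8$ one has $N_1=0$, consistent with the singular-weight form $\psi_{4,D_8}$ and the non-cuspidal character of its lift.
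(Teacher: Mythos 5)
Your plan follows the paper's strategy (lift $\psi_{12-k,D_k}$, read off the components $\{z_i=0\}$ with multiplicity one from the first Fourier--Jacobi coefficient, control everything else by a Borcherds product, sort orbits by the Eichler criterion, deduce cuspidality from the factor $\eta^{24-3k}$), but the decisive step --- showing there are \emph{no further} components of $\div \Lift(\psi_{12-k,D_k})$ --- is not actually carried out, and as described it would not work. The input of a Borcherds product on $\Orth(2,k+2)$ is a weakly holomorphic form of weight $-k/2$, equivalently a \emph{weak Jacobi form of weight $0$ and index $1$} for $D_k$; it is not ``the modular form associated with $\psi_{12-k,D_k}$'', which is holomorphic of positive weight. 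The paper must construct this input separately, namely $\phi_{0,D_8}=\bigl(2^{-1}\psi_{4,D_8}|_4 T_{-}(2)\bigr)/\psi_{4,D_8}$ and its restrictions $\phi_{0,D_k}(\tau,\gz_k)=\phi_{0,D_8}(\tau,\gz_k,0,\dots,0)$. Moreover Lemma \ref{lem-Jf} alone does not show that the only negative-norm coefficients of such a weight-$0$ form are the ones attached to $(-4)$-vectors of divisor $2$: it only bounds the norm class by class, and for the half-integral classes of $D_k^\vee/D_k$ (minimal norm $k/4$) it still permits, e.g.\ for $k=8$, nonzero coefficients of norm $-2$ at $\pm(e_1+\cdots+e_8)/2$, which would add rational quadratic divisors $\cD_v$ with $v^2=-8$, $v/2$ in a half-integral class --- divisors that are not even reflective. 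Excluding them requires the explicit computation of the $q^0$-term of $\phi_{0,D_8}$ (the paper finds $8+\zeta_1^{\pm1}+\cdots+\zeta_8^{\pm1}$, with no half-integral monomials), together with Lemma \ref{lem-Jf} and the Eichler criterion to guarantee that every possibly nonzero negative-norm coefficient is visible in that $q^0$-term.

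Second, even once a Borcherds product with the right divisor exists, you must still identify it with the Jacobi lift: the paper observes via (\ref{eq-litfJ}) that the lift vanishes along the divisor of $B(\phi_{0,D_k})$, so the quotient is a holomorphic modular form of weight $0$, hence constant by Koecher's principle, the constant being fixed by the common first Fourier--Jacobi coefficient. Without this step the divisor of the product tells you nothing about the divisor of the lift, and it is also exactly here that the $k=4$ exception (orbit of $2e_1$ only) is pinned down. Two smaller points: cuspidality of the lift for $k<8$ uses that $D_k$ is a maximal even lattice, not merely that the lifting ``preserves cuspidality''; and for $k\ne4$ the one-orbit claim does not come from ``symmetries of $D_k$ acting transitively on the classes $e_i\bmod D_k$'' (all $e_i$ lie in a single class already) but from the fact that a primitive $v$ with $v^2=-4$, $\div(v)=2$ has $(v/2)^2=-1$, which is incompatible with the half-integral discriminant classes unless $k\equiv 4\bmod 8$.
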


\noindent
{\bf Remark 1.}
$\Lift(\psi_{4,\,D_{8}})$  and $\Lift(\psi_{7,\,D_{5}})$
are  strongly reflective modular form of singular and canonical  weight
respectively.
The modular group of the lifting is, in fact, larger.
For $k\ne 4$ we have
\begin{equation}\label{eq-maxmodgr}
\Lift(\psi_{12-k,\,D_{k}})\in M_{12-k}(\Orth^+(L(D_k)),\tilde{\chi})
\end{equation}
where the binary character $\tilde{\chi}$ of ${\Orth}^+(L(D_k))$
is defined by the relation
$$
\tilde\chi(g)=1\  \Leftrightarrow\
g|_{L(D_k)^\vee/L(D_k)}=\id.
$$
If $k=4$ then the maximal modular group of $\Lift(\psi_{8,\,D_{4}})$
is a subgroup of order $3$ in $\Orth^+(L(D_4))$ (see  the proof of the theorem).

\begin{proof}
We described  $D_k$ and $D_k^\vee/D_k$ in the proof of Lemma \ref{lem-brd}.
In particular we see that  $2D_8^\vee < D_8$.
We denote the ``half-integral" part of $D_8^\vee$
by
\begin{equation}\label{eq-D8odd}
D_8^\vee(1)=\{(e_1+\dots+e_7\pm e_8)/2+D_8\}.
\end{equation}
The Fourier expansion  of $\psi_{4,\,D_8}$ has the following form
$$
\psi_{4,\,D_8}(\tau, \gz_8)=
\sum_{\substack{  n\in \ZZ,\ \ell \in D_8^\vee
\vspace{0.5\jot} \\
 2n-(\ell,\ell)= 0}}
f(n,\ell)\,\exp\bigl(2\pi i (n\tau+(\ell,\gz_8) \bigr)=
$$
$$
=\sum_{\ell\in D_8^\vee(1)}
\,\biggl(\frac{-4}{2\ell}\biggr)
\exp(\pi i \bigl((\ell,\ell)\tau+2(\ell,\gz_8)\bigr))
$$
where
\begin{equation}\label{eq-kron}
\biggl(\frac{-4}{2\ell}\biggr)=
\biggl(\frac{-4}{2l_1}\biggr)\dots \biggl(\frac{-4}{2l_8}\biggr).
\end{equation}
According to \cite[Theorem 3.1]{G2}
$$
\Lift(\psi_{12-k,D_k})(Z)\in M_{12-k}(\Tilde{\Orth}^+(L(D_k)))
$$
is a modular form  with trivial character.
The lifting of a Jacobi form $\phi_k(\tau,\gz)\in J_{k, 1} (S)$ (with $f(0,0)=0$)
of weight $k$ was defined in \cite{G2} by the formula
$$
\Lift(\phi_k)(\tau, \gz_n,\omega)=
\sum_{m\ge 1}m^{-1}
\bigl(\phi_k(\tau,\gz_n)e^{2\pi i m\omega}\bigr)\mid_k T_{-}(m)
$$
\begin{equation}\label{eq-litfJ}
=\sum_{m\ge 1}
m^{-1}\sum_{\substack{ad=m\\ b \mod d}}
\ a^{k}\phi_k\bigl(\frac{a\tau+b}d,\  a\gz_n\bigr)\,e^{2\pi i m\omega}.
\end{equation}
According to (\ref{eq-litfJ})
\begin{equation}\label{eq-Fexplift}
\Lift(\phi_k)(Z)=
\sum_{\substack{ n,m>0,\,\ell\in S^\vee\\
\vspace{0.5\jot} 2nm-(\ell,\ell)\ge 0}}\
\sum_ {d|(n,\ell,m)}
d^{k-1}f(\frac{nm}{d^2},\frac{\ell}d)\,e(n\tau+(\ell,\gz)+m\omega)
\end{equation}
where $d|(n,\ell,m)$ denotes a positive integral  divisor of the vector
in $U\oplus S^\vee(-1)$.
For example we  can calculate the Fourier expansion of the modular form
of singular weight
$$
\Lift(\psi_{4,D_8})(Z)=\sum_{\substack{
 n,\,m\in \ZZ_{>0}\\ \vspace{0.5\jot}
\ell=(l_1,\dots,l_8)\in D_8^\vee\\
\vspace{0.5\jot}  2nm-(\ell,\ell)=0}}
\ \sum_{\substack{t=2^w\\ \vspace{0.5\jot}
(n,\,\ell,\,m)/t\in U\oplus D_8^\vee(1)}}
t^3\sigma_3((n/t,\ell/t,m/t))
$$
\begin{equation}\label{eq-FexpD8}
\biggl(\frac{-4}{2\ell/t}\biggr)
\exp(2\pi i (n\tau+ (\ell,\gz_8)+m\omega))
\end{equation}
where $t=t(n,\ell,m)$ is the maximal common power of $2$ in $(n,\ell, m)$,
i.e. $t=2^w$, $n/t$ and $m/t$ are integral,   $\ell/t\in  D_8^\vee(1)$.
Then the greatest  common divisor
$d=(n/t,\ell/t,m/t)$ of a vector in $U\oplus D_8(-1)$
is an odd number and $\sigma_3(d)=\sum_{a|d}a^3$.

The maximal modular group of
$\Lift(\psi_{12-k,\,D_{k}})$ is, in fact, larger than $\Tilde{\Orth}^+(L(D_k))$.
The orthogonal group $\Orth(D_k^\vee/D_k)$ of the finite discriminant group
is of order $2$ for any $k\not\equiv 4 \mod 8$.
If $k\equiv 4 \mod 8$ then $\Orth(D_k^\vee/D_k)\cong S_3$.
The lifting is anti-invariant under the transformation
$z_1\to -z_1$ (the reflection $\sigma_{e_1}$) inducing
the non-trivial automorphism of $\Orth(D_k^\vee/D_k)$ if $k\ne 4$.
Therefore, if $k\ne 4$ then $\Lift(\psi_{12-k,\,D_{k}})$
is a modular form with respect to $\Orth^+(L(D_k))$ with  the character $\tilde{\chi}$
defined in  (\ref{eq-maxmodgr}).
If $k=4$ then the permutation of the coordinates give us
only the  permutation of the classes of
$(e_1+e_2+e_3+e_4)/2$ and $(e_1+e_2+e_3-e_4)/2$ in $D_4^\vee/D_4$.
Therefore $\Lift(\psi_{8,D_{4}})$
is modular with a binary character for  a subgroup of index $3$
in $\Orth^+(L(D_4))$.

To show that $\Lift(\psi_{12-k,D_{k}})$ is strongly reflective we consider  its Borcherds
product expansion. We can  construct the Borcherds product
using a Jacobi form of weight $0$ in a way similar
to \cite[Theorem 2.1]{GN4}.
We can obtain a  weak Jacobi form of weight $0$ for $D_8$
using the so-called ``minus" Hecke operator $T_{-}(2)$
(see (\ref{eq-litfJ}) and \cite{G2}, page 1193).
We put
$$
\phi_{0,\, D_8}(\tau,\gz)=
\frac{2^{-1}\psi_{4,D_8}|_4\,T_{-}(2)}{\psi_{4,D_8}}=
$$
$$
8\prod_{i=1}^8
\frac{\vartheta(2\tau,2z_i)}{\vartheta(\tau,z_i)}
+\frac{1}2
\prod_{i=1}^8
\frac{\vartheta(\frac{\tau}2,z_i)}{\vartheta(\tau,z_i)}
+\frac{1}2
\prod_{i=1}^8
\frac{\vartheta(\frac{\tau+1}2,z_i)}{\vartheta(\tau,z_i)}.
$$
Then $\phi_{0,\, D_8}$ is a weak Jacobi form of weight $0$ and index $1$. Using the Jacobi
product formula (\ref{theta-JTF}) we obtain
$$
\phi_{0,\, D_8}(\tau,\gz_8)=\sum_{n\ge 0,\ \ell \in D_8^\vee}
c(n,\ell)\,\exp\bigl(2\pi i (n\tau+(\ell,\gz_8) \bigr)=
$$
$$
\zeta_1^{\pm 1}+\dots +\zeta_8^{\pm 1}+8+q(\dots)\qquad {\rm where\ }\  \zeta_i=\exp(2\pi i z_i).
$$
We noted above that the Fourier coefficient  $c(n,\ell)$
of weak Jacobi form $\phi_{0,\, D_8}$ depends only on the hyperbolic norm $2n-\ell^2$ and
the class $\ell \mod D_8^\vee$. Moreover, if $c(n,\ell)\ne 0$ then
$2n-(\ell,\ell)\ge -2$
(see Lemma \ref{lem-Jf} and the representation of $D_8^\vee/D_8$ above).
According to the Eichler criterion, the primitive vectors in $2U\oplus D_8^\vee$
with norm equal to $-1$ and $-2$ form three $\Tilde{\SO}^+(L(D_8))$-orbits
represented by the elements of the minimal norms in $D_8^\vee$.
Therefore the $q^0$-term in the Fourier expansion of $\phi_{0,\, D_8}$
contains all types of the Fourier coefficients
$\phi_{0,\,D_8}$ with $2n-(\ell,\ell)<0$.
Using the Borcherds product construction as in  Theorem 2.1
of \cite{GN4} we  obtain a modular form
$$
B(\phi_{0,\,D_8})(Z)=
q(\zeta)^{(1/2,\dots,1/2)}s\prod_{\substack{n,m\ge 0\\ \vspace{0.5\jot} (n,\ell,m)>0}}
(1-q^n(\zeta)^\ell s^m)^{c(nm,\ell)}
$$
$$
=\bigl(\psi_{4,D_8}(\tau, \gz_8)e^{2\pi i \omega}\bigr)
\exp\biggl(-\phi_{0,D_8}(\tau,\gz_8)|\sum_{m\ge 1} m^{-1}T_{-}(m)e^{2\pi i m\omega}\biggr)
$$
in  $M_4(\Tilde{\Orth}^+(L(D_8)))$  with the trivial character
of $\Tilde{\Orth}^+(L(D_8))$
where $q=\exp(2\pi i \tau)$, $s=\exp(2\pi i \omega)$
and $(\zeta)^\ell=(\zeta_1^{l_1}\zeta_2^{l_2}\dots \zeta_8^{l_8})$.
Its  divisors are  determined by the Fourier coefficients $\zeta_i^{\pm 1}$,
i.e.  by the  vectors $\pm e_i\in D_8$ ($1\le i \le 8$).
According to the Eichler criterion
the $\Tilde{\SO}^+(L(D_8))$-orbit of any vector $v\in 2U\oplus D_8(-1)$ with
$v^2=-4$ and $\div(v)=2$ is defined by $v/2 \mod D_8$.
Therefore any such vector belongs to the orbit of $e_1$. We proved that
$$
\div_{\cD(L(D_8))} B(\phi_{0,\,D_8})(Z)=
\bigcup_{\substack {\pm v\in L(D_8)\vspace{0.5\jot} \\  v^2=-4,\ \div(v)=2}}
\cD_v.
$$
The modular projection of this divisor on
$\cS\cM(D_8)$ is irreducible.
The  formula (\ref{eq-litfJ})  shows that the lifting preserves
the divisor of  type $z_i=0$
of the Jacobi form. It follows that the divisor
of $\Lift(\psi_{4,D_8})(Z)$ contains the divisor given in  Theorem \ref{thm-liftD8}.
 According to Koecher's principle
$$
\Lift(\psi_{4,\,D_8})(Z)=B(\phi_{0,\,D_8})(Z)
$$
because they have the same first Fourier-Jacobi coefficient.

We can also use the weak Jacobi form $\psi_{4,D_8}$ in order to construct
Borcherds  products for the lattices $2U\oplus D_k(-1)$ with $2\le k \le 8$.
We put
$$
\phi_{0,\,D_k}(\tau,\gz_k)=
\phi_{0,\,D_8}(\tau,\gz_k,0,\dots,0)=\zeta_1^{\pm 1}+\dots \zeta_k^{\pm 1}+(24-2k)+q(\dots).
$$
Then $\phi_{0,\,D_k}(\tau,\gz_k)$ is a weak Jacobi form of weight $0$ for $D_k$.
Using the same arguments as  for $D_8$ above we obtain
$$
\Lift(\psi_{12-k,\,D_k})(Z)=B(\phi_{0,\,D_k})(Z).
$$
If $k=4$ then the divisor of the last function is smaller than
the divisor in Theorem \ref{thm-liftD8}. It is defined by the
$\Tilde{\Orth}^+(L(D_4))$-orbit of $2e_1$ (see the proof of Lemma \ref{lem-brd}).

If $k<8$ then
the Jacobi form $\psi_{12-k,\,D_k}$ is cuspidal. Therefore its lifting
is a cusp forms because $D_k$ is a maximal even lattice.
\end{proof}

Let  $\gG(F)$ be   a Lorentzian Kac--Moody (super) Lie algebra
of Borcherds type determined by an automorphic form $F$.
Note that  the generators and relations of this algebra are defined
by the Fourier expansion of $F$ at a~zero-dimensional cusp.
The Borcherds product of $F$ determines only the multiplicities of the positive
roots of this algebra. Therefore for  an explicit construction
of  $\gG(F)$ one has to find the Fourier expansion of $F$ at a cusp.
This explains the importance of explicit formulae for the Fourier coefficients.
We give the Fourier expansion of $\Delta_{12-k,\, D_k}$
in Corollary \ref{cor-Fexp} below (see also  (\ref{eq-FexpD8})
and (\ref{eq-FexpD7})).

In 1996 (see \cite{B2}) Borcherds constructed the strongly reflective
automorphic discriminant $\Phi_4$ of the moduli space on Enriques surfaces
$$
\Phi_4\in M_4(\Orth^+(U\oplus U(2)\oplus E_8(-2)),\,\chi_2)
$$
where $\chi_2$ is a binary character.  The Borcherds products of   $\Phi_{4}$
were given  in two non-equivalent  cusps (see  \cite[Example 13.7]{B3}).
This function is called sometimes Borcherds-Enriques form.
See \cite{HM} for its  applications
in string theory.
In the next corollary we obtain a Jacobi lifting  construction of $\Phi_4$.

\begin{corollary}\label{cor-Phi4}
The form $\Lift(\psi_{4,\,D_8})$ is equal, up to a constant, to
the Borcherds modular form $\Phi_4$.
\end{corollary}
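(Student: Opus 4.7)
The plan is to exploit the rarity of modular forms of singular weight, together with matching divisor data. Note first that $L(D_8) = 2U \oplus D_8(-1)$ has signature $(2,10)$, so the singular weight in this setting is $(10-2)/2=4$, which is precisely the weight of both $\Lift(\psi_{4,D_8})$ and $\Phi_4$. Both forms are strongly reflective, and the goal is to show they span the same one-dimensional space.

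First I would establish a lattice-theoretic identification: find a natural primitive embedding of the Enriques lattice $\Lambda_{\mathrm{En}} = U \oplus U(2) \oplus E_8(-2)$ into (an appropriate rescaling or overlattice of) $L(D_8)$ so that the two type-$\mathrm{IV}$ domains are naturally identified. Under this identification, the $(-2)$-vectors of $\Lambda_{\mathrm{En}}$ that define reflections in $\Orth^+(\Lambda_{\mathrm{En}})$ should correspond to the $(-4)$-vectors of divisor $2$ in $L(D_8)$ described in Theorem \ref{thm-liftD8}, so that the ramification divisors of $\Phi_4$ and $\Lift(\psi_{4,D_8})$ coincide. This is the key compatibility that must be checked.

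Next I would match the modular groups and characters. Equation~\eqref{eq-maxmodgr} gives $\Lift(\psi_{4,D_8}) \in M_4(\Orth^+(L(D_8)),\tilde\chi)$; it remains to verify that, under the identification above, the pullback of $\Lift(\psi_{4,D_8})$ lies in $M_4(\Orth^+(\Lambda_{\mathrm{En}}),\chi_2)$, i.e.\ that $\tilde\chi$ restricts to Borcherds's binary character $\chi_2$. With these in place, I would invoke the uniqueness principle for singular-weight modular forms: any such form corresponds to a vector-valued theta series in the Weil representation of the finite discriminant group, and after fixing the character and imposing the strongly reflective divisor condition, the resulting space is one-dimensional. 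Hence $\Phi_4$ and the pullback of $\Lift(\psi_{4,D_8})$ span the same line, yielding equality up to a nonzero constant. As a cross-check, the Borcherds product representation $\Lift(\psi_{4,D_8})=B(\phi_{0,D_8})$ from Theorem \ref{thm-liftD8} should coincide, after the identification, with Borcherds's expansion of $\Phi_4$ at its second $0$-dimensional cusp (Example~13.7 of \cite{B3}), with $\phi_{0,D_8}$ playing the role of Borcherds's input data.

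The main obstacle is the lattice-theoretic identification in the first step: the discriminant groups of $L(D_8)$ and $\Lambda_{\mathrm{En}}$ have orders $4$ and $1024$ respectively, so the identification cannot be a lattice isomorphism but must proceed via a nontrivial primitive embedding (of index~$16$ matching $\sqrt{1024/4}$) or an overlattice construction. Verifying that such an embedding induces the correct matching of reflective divisors and of characters is the essential technical step; once this is in place, singular-weight uniqueness immediately forces proportionality.
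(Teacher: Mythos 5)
There is a genuine gap, and it sits exactly where you locate the ``essential technical step.'' The relation between the two lattices is not a primitive embedding and not an isometric embedding of index $16$: a full-rank sublattice of index $>1$ is never primitively embedded, and if you did have an isometric copy of $\Lambda_{\mathrm{En}}=U\oplus U(2)\oplus E_8(-2)$ inside $L(D_8)$ of index $16$, its $(-2)$-vectors would stay $(-2)$-vectors of $L(D_8)$, whose rational quadratic divisors are \emph{not} the divisor of $\Lift(\psi_{4,\,D_8})$ (that divisor consists of $(-4)$-vectors with $\div(v)=2$, by Theorem \ref{thm-liftD8}); so the compatibility you need would fail under the identification your numerics suggest. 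The mechanism the paper uses is different and requires no embedding at all: $\Orth(L)=\Orth(L^\vee)$ and rescaling the quadratic form does not change the orthogonal group, hence
$$
\Orth^+(L_E)=\Orth^+(L_E^\vee(2))=\Orth^+(U(2)\oplus U\oplus E_8(-1))\cong \Orth^+(L(D_8)),
$$
using the lattice isomorphism $U(2)\oplus E_8(-1)\cong U\oplus D_8(-1)$. Thus the homogeneous domain and the arithmetic group are literally the same, the lattices $L_E$ and $L(D_8)$ correspond to two different $0$-dimensional cusps of one modular variety, and under $L\mapsto L^\vee(2)$ a $(-2)$-vector of $L_E$ becomes a $(-4)$-reflective vector of $L(D_8)$ of divisor $2$ --- which is precisely how the two divisors get matched.

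Your closing step is also weaker than you need it to be: the claim that the space of strongly reflective singular-weight forms with fixed character is one-dimensional is asserted, not proved, and nothing in the paper supplies it. It is also unnecessary. Once the groups are identified and the divisors shown equal, both forms have weight $4$, so the quotient $\Phi_4/\Lift(\psi_{4,\,D_8})$ is a holomorphic modular form of weight $0$, and Koecher's principle makes it constant; the singularity of the weight plays no role in this last step. So the correct skeleton is: (i) the duality-plus-rescaling identification of groups and domains, (ii) translation of the $(-2)$-divisor of $\Phi_4$ into the $(-4)$, $\div=2$ divisor, (iii) Koecher's principle on the ratio. Your cross-check remark about the second cusp in Borcherds's Example 13.7 is consistent with this picture, but as written your main argument does not reach it.
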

\begin{proof}
The divisor of $\Phi_4$ in $\cD(L_E)$
with $L_E=U\oplus U(2)\oplus E_8(-2)$
is determined by the $\Orth^+(M_E)$-orbit of a $(-2)$-vector $v\in U$
(see \cite{B2}). Note that a renormalization of $L$
($L\to L(n)$) does not change the orthogonal group
and $\Orth(L)=\Orth(L^\vee)$ for any lattice $L$.
Therefore
$$
\Orth^+(L_E)=\Orth^+(L_E^\vee(2))=\Orth^+(U(2)\oplus U\oplus E_8(-1))
\cong \Orth^+(L(D_8))
$$
because
$$
U(2)\oplus E_8(-1)\cong  U\oplus D_8.
$$
These two  hyperbolic lattices  correspond to the two different $0$-dimensional cusps
of the modular variety $\Orth^+(L_E)\setminus \cD(L_E)$.
An arbitrary  $(-2)$-vector of $L_E$ becomes  a reflective vector of $L_E^\vee(2)$
of length $-4$. Therefore the $(-2)$-reflective divisor
of $\Orth^+(L_E)\setminus \cD(L_E)$ corresponds to the
$(-4)$-reflective divisor of $\Orth^+(L(D_8))\setminus \cD(L(D_8))$.
We see that the modular forms $\Phi_4$ and $\Lift(\psi_{4,\,D_8})$ have the same divisor.
Therefore they are equal, up to a constant, according to Kocher's principle.
\end{proof}

The automorphic Borcherds products  related to  the quasi pull-backs of $\Phi_4$
appear in the new paper   \cite{Y} of K.-I. Yoshikawa.
These modular forms $\Phi_V$ are the automorphic discriminants of the K\"ahler moduli
of a Del Pezzo surface $V$ of degree $1 \le n\le 9$ (compare with \cite{GN5}).
The function $\Phi_V$ determines also the analytic torsion of some exceptional
Calabi--Yau threefolds of Borcea-Voisin type (see \cite[Theorem 1.1]{Y}).

\begin{corollary}\label{cor-Y}
Let $V$ be a Del Pezzo surface of degree $1\le \deg V\le 6$.
The modular form $\Phi_{V}$ of Yoshikawa  is equal, up to a constant,
to the modular form
$\Delta_{4+\deg V,\, D_{8-\deg V}}=\Lift(\psi_{4+\deg V,\,D_{8-\deg V}})$
of Theorem \ref{thm-liftD8}.
\end{corollary}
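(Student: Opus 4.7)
The plan is to mimic the proof of Corollary \ref{cor-Phi4}, taking $\Phi_V$ in place of $\Phi_4$ and $\Lift(\psi_{4+\deg V,\,D_{8-\deg V}})$ in place of $\Lift(\psi_{4,\,D_8})$. The key is that Yoshikawa's $\Phi_V$ is constructed in \cite{Y} as a quasi pull-back of the Borcherds--Enriques form $\Phi_4$ along a rational quadratic sub-domain determined by the exceptional configuration of $V$, while on the Jacobi side the passage from $\psi_{4,\,D_8}=\vartheta(\tau,z_1)\cdots\vartheta(\tau,z_8)$ to $\psi_{4+\deg V,\,D_{8-\deg V}}=\eta^{3\deg V}\prod_{i=1}^{8-\deg V}\vartheta(\tau,z_i)$ is exactly the analogous quasi pull-back: setting $z_{9-\deg V}=\dots=z_8=0$ kills $\deg V$ of the factors $\vartheta(\tau,z_i)$, and each such factor contributes a derivative $\eta(\tau)^3$ to the residue via the Jacobi triple product formula (\ref{theta-JTF}). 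This accounts for the weight shift from $4$ to $4+\deg V$ on both sides simultaneously.

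The next step is lattice bookkeeping. I would identify Yoshikawa's Kähler moduli lattice $L_V$ as an Enriques-type scaling, and then apply the same duality trick used in Corollary \ref{cor-Phi4}: $\Orth^+(L_V)=\Orth^+(L_V^\vee(2))$, and $L_V^\vee(2)$ becomes isomorphic to $L(D_{8-\deg V})=2U\oplus D_{8-\deg V}(-1)$ via the chain of isomorphisms
\[
U(2)\oplus E_8(-1)\cong U\oplus D_8, \qquad D_8\supset D_{8-\deg V}\oplus \deg V\cdot A_1,
\]
where the sublattice $\deg V\cdot A_1$ (or the appropriate orthogonal complement) corresponds to the components killed by the quasi pull-back. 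This places $\Phi_V$ and $\Delta_{4+\deg V,\,D_{8-\deg V}}$ on the same orthogonal group of the same lattice.

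The divisor comparison then goes as in Corollary \ref{cor-Phi4}. The divisor of $\Phi_V$, inherited from $\Phi_4$, is cut out by $(-2)$-vectors of the Enriques-type lattice $L_V$; under the rescaling $L_V\rightsquigarrow L_V^\vee(2)\cong L(D_{8-\deg V})$ these become $(-4)$-vectors of divisor $2$, forming a single $\Tilde{\Orth}^+(L(D_{8-\deg V}))$-orbit. By Theorem \ref{thm-liftD8} this is exactly the divisor of $\Delta_{4+\deg V,\,D_{8-\deg V}}$. Since both forms are holomorphic strongly reflective modular forms of weight $4+\deg V$ with the same divisor, their ratio is a holomorphic modular form of weight zero, hence a constant by Koecher's principle.

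The main obstacle, as in Corollary \ref{cor-Phi4}, is not conceptual but rather the careful lattice-theoretic identification: one has to verify that Yoshikawa's Kähler moduli lattice really is the rescaled dual of $L(D_{8-\deg V})$ with the correct identification of the reflective divisor, and that the modular group (together with its character) on which $\Phi_V$ is defined matches the one for $\Delta_{4+\deg V,\,D_{8-\deg V}}$ given in Theorem \ref{thm-liftD8} and formula (\ref{eq-maxmodgr}). Once this matching is established, the restriction $\deg V\le 6$ corresponds exactly to the range $2\le k\le 8$ for which $\psi_{12-k,\,D_k}$ is defined with strict inequality $k\ge 2$, so the statement follows uniformly.
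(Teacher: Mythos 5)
Your opening paragraph is in fact the paper's entire proof: the paper observes that $(2\pi i)^{-1}\partial_{z_8}\psi_{4,\,D_8}|_{z_8=0}=\eta^3\vartheta(\tau,z_1)\cdots\vartheta(\tau,z_7)$, so that the forms $\Lift(\psi_{12-k,\,D_k})$ form a quasi pull-back tower based on $\Lift(\psi_{4,\,D_8})$, that by \cite[\S 6]{Y} the forms $\Phi_V$ form the analogous quasi pull-back tower based on $\Phi_4$, and then it concludes by Corollary \ref{cor-Phi4} ($\Phi_4=\Lift(\psi_{4,\,D_8})$ up to a constant) together with Koecher's principle. Had you carried that observation through — same starting form, same quasi pull-back operation along corresponding divisors at each step — you would be done, and no identification of Yoshikawa's lattices would be needed beyond what Corollary \ref{cor-Phi4} already supplies.

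Instead, the body of your argument pivots to a direct lattice-and-divisor comparison, and there the gaps are real. First, the identification of Yoshikawa's K\"ahler moduli lattice with the rescaled dual of $L(D_{8-\deg V})$, and the claim that the divisor of $\Phi_V$ is exactly the $(-2)$-reflective locus there with multiplicity one, are the whole content of your route, and you leave them unverified (your proposed inclusion $D_8\supset D_{8-\deg V}\oplus \deg V\cdot A_1$ is also wrong as stated: the vectors cut out by $z_i=0$ are the $2e_i$, of square $-4$ and divisor $2$, not $(-2)$-vectors, so the complementary summand is a sum of $\langle -4\rangle$'s, not of $A_1$'s). Second, your divisor match "all $(-4)$-vectors of divisor $2$, forming a single orbit" fails precisely at $\deg V=4$, i.e.\ $k=4$: Theorem \ref{thm-liftD8} states that the divisor of $\Lift(\psi_{8,\,D_4})$ is only the $\Tilde{\Orth}^+(L(D_4))$-orbit of $2e_1$, which is strictly smaller than the full $(-4)$, divisor-$2$ locus (and the modular group there is smaller as well, an index-$3$ subgroup of $\Orth^+(L(D_4))$). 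So in the $D_4$ case your Koecher comparison would be comparing forms whose divisors you have not shown to agree, and the equality could fail unless one checks that Yoshikawa's discriminant divisor lands on the correct orbit. The paper's tower argument sidesteps both difficulties, because the quasi pull-back from the $D_5$ stage automatically produces the right divisor and group at the $D_4$ stage.
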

\begin{proof}
The proof of Theorem \ref{thm-liftD8} shows that the singular modular form
$\Lift(\psi_{4,\,D_8})(Z)$  is the  generating function for
the $D_8$-towers of the strongly reflective modular forms
of Theorem \ref{thm-liftD8}. We have
$$
(2\pi i)^{-1}\frac{\partial \psi_{4,\,D_8}(\tau,\gz_8)}{\partial z_8}\big|_{z_8=0}=
\psi_{7,\,D_7}(\tau,\gz_7).
$$
Therefore
$\Lift(\psi_{5,D_7})$ is the quasi pull-back  (see \cite[pp. 200--201]{B1},
and \cite[\S 6]{GHS1}) of $\Lift(\psi_{4,\,D_8})$ along the divisor $z_8=0$.
We can continue this process. Then
 $\Lift(\psi_{6,\,D_6})$ is  the quasi pull-back of
$\Lift(\psi_{5,\,D_7})$ along $z_7=0$ and so on till $\Lift(\psi_{10,\,D_2})$.

The Yoshikawa modular forms $\Phi_V$  for $\deg V\ge  1$ also constitute a  similar tower
with respect to the quasi pull-backs based on the Borcherds form $\Phi_4$
 (see \cite[\S 6]{Y}).
To finish the proof we use again Koecher's principle.
\end{proof}

Theorem \ref{thm-liftD8} and (\ref{eq-litfJ})  give the formula for the  Fourier expansion of
$\Lift(\psi_{12-k,\,D_k})$ and $\Phi_V$.
For any integral $3m>0$ we put
$$
\eta(\tau)^{3m}=\sum_{n>0} \tau_{3m}(\frac{n}{8}) \,q^{n/8}.
$$
We put $\tau_0(n)=1$ if and only if $n=1$.
\begin{corollary}\label{cor-Fexp}
We have the following Fourier expansion
$$
\Lift(\psi_{12-k,\,D_k})(Z)=\sum_{\substack{
 n,\,m\in \ZZ_{>0}\\ \vspace{0.5\jot}
\ell=(l_1,\dots,l_n)\in D_k^\vee}}
\sum_{d\mid (n,\ell,m)}
d^{11-k}
\tau_{24-3k}(\frac{2nm-\ell^2}{2d^2})
$$
$$
\biggl(\frac{-4}{2\ell/d}\biggr)
\exp(2\pi i (n\tau+ (\ell,\gz_k)+m\omega))
$$
where  $d\mid (n,\,\ell,\,m)$ is an integral  divisor
in $U\oplus D_k(-1)^\vee$ and we use the notation (\ref{eq-kron}).
\end{corollary}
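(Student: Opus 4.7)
The plan is to apply the general lifting formula (\ref{eq-Fexplift}) to the specific Jacobi form $\psi_{12-k,D_k}$ by computing its Fourier coefficients in closed form. Since Theorem \ref{thm-liftD8} identifies $\Delta_{12-k,D_k}$ with the Jacobi lifting $\Lift(\psi_{12-k,D_k})$, and since the coefficient-by-coefficient formula for the lifting is already written out in (\ref{eq-Fexplift}) with the weight $k$ of the Jacobi form to be read as $12-k$ in our setting (so the exponent of $d$ becomes $d^{11-k}$), the entire task reduces to computing $f(n,\ell)$ for $\psi_{12-k,D_k}$.

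First I would expand
\[
\psi_{12-k,D_k}(\tau,\gz_k)=\eta(\tau)^{24-3k}\prod_{i=1}^{k}\vartheta(\tau,z_i)
\]
by convolving the defining series. The product of theta-series gives
\[
\prod_{i=1}^{k}\vartheta(\tau,z_i)=\sum_{\ell=(l_1,\dots,l_k)}\Bigl(\frac{-4}{2\ell}\Bigr)\,q^{(\ell,\ell)/2}\,\zeta^{\ell},
\]
where by (\ref{eq-kron}) the Kronecker symbol factorises and is nonzero only when each $2l_i$ is odd; in particular every contributing $\ell$ has all half-integral coordinates and hence lies in $D_k^\vee$. Multiplying by $\eta(\tau)^{24-3k}=\sum_{n'>0}\tau_{24-3k}(n'/8)\,q^{n'/8}$ and matching powers of $q$, the coefficient of $q^n\zeta^\ell$ in $\psi_{12-k,D_k}$ equals
\[
f(n,\ell)=\tau_{24-3k}\Bigl(\tfrac{2n-(\ell,\ell)}{2}\Bigr)\cdot\Bigl(\frac{-4}{2\ell}\Bigr),
\]
with the convention that $\tau_{24-3k}$ vanishes on non-positive arguments when $12-k<8$ (cuspidality of $\psi_{12-k,D_k}$) and on non-integral arguments.

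Substituting this into (\ref{eq-Fexplift}) applied to the weight $12-k$ Jacobi form gives
\[
\Lift(\psi_{12-k,D_k})(Z)=\sum_{\substack{n,m>0\\ \ell\in D_k^\vee}}\ \sum_{d\mid(n,\ell,m)} d^{11-k}\,f\!\Bigl(\tfrac{nm}{d^2},\tfrac{\ell}{d}\Bigr)\,e(n\tau+(\ell,\gz_k)+m\omega),
\]
and plugging in $f(nm/d^2,\ell/d)=\tau_{24-3k}\bigl((2nm-\ell^2)/(2d^2)\bigr)\bigl(\frac{-4}{2\ell/d}\bigr)$ recovers the stated formula. The only real subtlety, and the step where I would be most careful, is the bookkeeping: one must verify that divisibility $d\mid(n,\ell,m)$ in $U\oplus D_k^\vee(-1)$ together with the vanishing properties of $\tau_{24-3k}$ and of the Kronecker symbol (forcing $\ell/d$ to still have all half-integral coordinates when the term is nonzero) keeps the sum formally identical to the one obtained by direct substitution. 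Everything else is a routine comparison of Fourier coefficients using Theorem \ref{thm-liftD8}.
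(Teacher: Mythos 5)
Your proposal is correct and takes essentially the same route as the paper: the paper also writes the Fourier coefficients of $\psi_{12-k,\,D_k}$ as $\tau_{24-3k}\bigl(\tfrac{2n-\ell^2}{2}\bigr)\bigl(\tfrac{-4}{2\ell}\bigr)$ (you derive this by convolving $\eta^{24-3k}$ with the theta product, the paper simply states it) and then substitutes into the lifting expansion (\ref{eq-Fexplift}) with weight $12-k$, giving the exponent $d^{11-k}$. The bookkeeping point you flag is exactly the one the paper addresses, namely that the Kronecker symbol annihilates any term in which $\ell/d$ leaves the odd part of $D_k^\vee$, so the divisor sum is formally identical to the stated one.
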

\begin{proof}
In the notation above we have
$\psi_{12-k,\,D_k}(\tau, \gz_k)=$
$$
\sum_{n\in \NN,\,\ell \in D_k^\vee(1)}
\,\tau_{24-3k}(\frac{2n-\ell^2}2)\biggl(\frac{-4}{2\ell}\biggr)
\exp(2\pi i (n\tau+(\ell,\gz_k)).
$$
Using (\ref{eq-Fexplift}) we obtain the formula of the corollary.
The Kronecker symbol  $\bigl(\frac{-4}{2l_i/d}\bigr)=0$ if
\ $2l_i/d$ is pair. Therefore the vector $\ell/d$ belongs in fact
to the odd part of $D_k^\vee$ (see (\ref{eq-D8odd})) and
we can divide any vector by its maximal common power of $2$
in $(n,\ell,m)$.  Then we make a summation on
odd common divisors like in  (\ref{eq-FexpD8}).

We note that $\eta(\tau)^3$ has elementary formula for its Fourier coefficients:
$\tau_3(n/8)=\bigl(\frac{-4}{N}\bigr)N$ if and only if $n=N^2$.
Therefore we  have
$$
\Lift(\phi_{5,\,D_7})(\tau,\gz_7,\omega)=
\sum_{N\ge 1}
\sum_{\substack{
2nm-\ell^2=\frac{N}4\\
\vspace{0.5\jot}
n,m\in \NN,\,\ell \in D_7^\vee}}
\sum_{d|(n,\ell,m)}
$$
\begin{equation}\label{eq-FexpD7}
N
\,\biggl(\frac{-4}{2\ell/d}\biggr)\,
\exp(2\pi i (n\tau+(\ell,\gz_7)+m\omega)).
\end{equation}
\end{proof}

The modular forms $\Phi_V$ determine some automorphic Lorentzian Kac-Moody super Lie
algebras related to Del Pezzo surfaces.
The formula of Corollary \ref{cor-Fexp} gives us the generating function
of the imaginary simple roots of these algebras.
\smallskip

\noindent
{\bf Remark 2.} The quasi pull-back of
$\Delta_{10,\,D_2}=\Phi_V$ ($\deg V=6$) is  the Siegel  modular form
$\Delta_{11}\in S_{11}(\Gamma_2)$ (see \cite[(3.11)]{GN4})
where $\Gamma_2$ is the paramodular group of  type $(1,2)$.
Note that  (see \cite[Lemma 1.9]{GN4})
$$
\Gamma_2/\{\pm \id\}\cong \Tilde{\SO}^+(L(D_1))\qquad
{\rm where }\ \ D_1=\langle 4\rangle.
$$
Then $\Delta_{11}
\left(\begin{smallmatrix}
\tau&z\\z&\omega\end{smallmatrix}\right)
=\Lift(\eta(\tau)^{21}\vartheta(\tau, 2z))$
is strongly reflective (see  \cite[Example 1.15 and (3.11)]{GN4}).
Its divisor contains two irreducible components $\{z=0\}$
and $\{z=1/2\}$.
The quasi pull-back of the Siegel modular form $\Delta_{11}$
along $z=0$ is equal to the product of the  Ramanujan modular forms
$\Delta_{12}(\tau)\Delta_{12}(2\omega)$ (in the  $\Gamma_2$-coordinates).

\section{$A_2$-tower of strongly
reflective modular  forms}

We start with a useful general fact.
\begin{lemma}\label{lem-oplus}
Let $\phi_i(\tau,\gz_i)\in J_{k_i,m}(S_i)$
where $S_i$ is a positive definite  integral lattice.
Then
$$
\phi_1(\tau,\gz_1)\cdot\phi_2(\tau,\gz_2)
\in J_{k_1+k_2, m}(S_1\oplus S_2).
$$
\end{lemma}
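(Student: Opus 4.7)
The plan is to verify directly the three conditions in the definition of a Jacobi form (the $\SL_2(\ZZ)$-equation, the Heisenberg-type quasi-periodicity equation, and the Fourier expansion condition) for the product $\phi(\tau,\gz)=\phi_1(\tau,\gz_1)\phi_2(\tau,\gz_2)$, where $\gz=(\gz_1,\gz_2)\in (S_1\oplus S_2)\otimes\CC$. The point is that everything reduces to the elementary identities
$$
(\gz,\gz)_{S_1\oplus S_2}=(\gz_1,\gz_1)_{S_1}+(\gz_2,\gz_2)_{S_2},\qquad (S_1\oplus S_2)^\vee=S_1^\vee\oplus S_2^\vee,
$$
together with the analogous identity for the bilinear form.

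First I would treat the modular transformation. Multiplying the two equations (\ref{def-JF}) for $\phi_1$ and $\phi_2$ and using the additivity of $(\gz,\gz)$ under orthogonal direct sum yields
$$
\phi\bigl(\tfrac{a\tau+b}{c\tau+d},\tfrac{\gz}{c\tau+d}\bigr)=(c\tau+d)^{k_1+k_2}\exp\bigl(\pi i\tfrac{cm(\gz,\gz)}{c\tau+d}\bigr)\phi(\tau,\gz),
$$
which is the right identity with weight $k_1+k_2$ and the same index $m$. Next, for $\lambda=(\lambda_1,\lambda_2),\mu=(\mu_1,\mu_2)\in S_1\oplus S_2$, the Heisenberg equation for each $\phi_i$ contributes an exponential factor $\exp(-2\pi i(\tfrac{m}{2}(\lambda_i,\lambda_i)\tau+m(\lambda_i,\gz_i)))$, and the same additivity identities show their product equals the required factor $\exp(-2\pi i(\tfrac{m}{2}(\lambda,\lambda)\tau+m(\lambda,\gz)))$.

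Finally, for the Fourier expansion: writing each $\phi_i$ as $\sum f_i(n_i,\ell_i)\exp(2\pi i(n_i\tau+(\ell_i,\gz_i)))$ with $n_i\in\ZZ$ and $\ell_i\in S_i^\vee$, the product has Fourier coefficients
$$
f(n,\ell)=\sum_{\substack{n_1+n_2=n\\ \ell_1+\ell_2=\ell}}f_1(n_1,\ell_1)f_2(n_2,\ell_2),
$$
indexed by $n\in\ZZ$ and $\ell=(\ell_1,\ell_2)\in S_1^\vee\oplus S_2^\vee=(S_1\oplus S_2)^\vee$. Since the hyperbolic norm is additive,
$$
N_m(n,\ell)=2nm-(\ell,\ell)=N_m(n_1,\ell_1)+N_m(n_2,\ell_2),
$$
the holomorphicity condition $N_m\ge 0$ for each factor implies $N_m(n,\ell)\ge 0$ for the product, so $\phi\in J_{k_1+k_2,m}(S_1\oplus S_2)$.

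There is no essential obstacle — the lemma is a direct bookkeeping verification. The only thing to double-check is the convention that in the definition \eqref{def-JF} the arguments of the two factors really are $\gz_1,\gz_2$ living in orthogonal summands, so that cross terms $(\lambda_1,\gz_2)$ etc. vanish. Once this is observed, the transformations match summand by summand and the Fourier expansion inherits the required support condition.
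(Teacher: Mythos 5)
Your verification is correct and is exactly the argument the paper has in mind: the paper dismisses the lemma with ``the proof follows directly from the definition,'' and your check of the $\SL_2(\ZZ)$-equation, the Heisenberg equation, and the support condition $2nm-(\ell,\ell)\ge 0$ via additivity of the quadratic form on $S_1\oplus S_2$ is precisely that direct verification spelled out. No gaps.
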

The proof of the lemma follows directly from the definition.
\smallskip

In order to construct Jacobi modular forms for $D_n$ we used
the Jacobi form $\vartheta(\tau,z)$ which is the denominator function
of the affine Lie algebra $\hat A_1$.
In this section we use
the denominator function
of the affine Lie algebra $\hat A_2$.
Let $A_2=\ZZ a_1+\ZZ a_2$ where $a_1$ and $a_2$ are the simple roots of $A_2$.
We can rewrite this lattice in
the Euclidian basis $(e_1,e_2,e_3)$
$$
A_2\otimes \CC=z'_1e_1+z'_2e_2+z'_3e_3
$$
where
$$
z'_1=z_1,\quad z'_2=z_2-z_1,\quad z'_3=-z_2.
$$
The denominator function of the affine Kac--Moody algebra  $\hat A_2$
is associated to the holomorphic Jacobi form
of singular weight $1$ with character $v_\eta^8$ of order $3$
$$
\Theta(\tau,z_1,z_2)=
\frac{1}{\eta(\tau)}
\vartheta(\tau,z_1)\vartheta(\tau,z_2-z_1)\vartheta(\tau,z_2)
\in J_{1,1}(A_2; v_\eta^8)
$$
(see \cite{Ka} and \cite{De}).
Therefore using Lemma \ref{lem-oplus}
we can define three holomorphic Jacobi forms with trivial character
\begin{align*}
\psi_{9,\,A_2}(\tau,z_1,z_2)&=
\eta^{16}(\tau)\Theta(\tau,z_1,z_2)
\in J^{(cusp)}_{9,1}(A_2),\\
\psi_{6,\,2A_2}(\tau,z_1,\dots,z_4)&=
\eta^{8}(\tau)\Theta(\tau,z_1,z_2)\Theta(\tau,z_3,z_4)
\in J^{(cusp)}_{6,1}(2A_2),\\
\psi_{3,\,3A_2}(\tau,z_1,\dots,z_6)&=
\Theta(\tau,z_1,z_2)\Theta(\tau,z_3,z_4)
\Theta(\tau,z_5,z_6)
\in J_{3,1}(3A_2).
\end{align*}
The Jacobi lifting construction gives
three modular forms of orthogonal type.
In particular  we obtain
$$
\Lift(\psi_{3,\,3A_2})\in M_1(\Tilde{\Orth}^+(L(3A_2)))
$$
of singular weight with trivial character. For $n=2$ we obtain a cusp form of canonical weight.
The divisor of the Jacobi form induces a divisor
of the lifting.
Note that $z_i=0$ is the hyperplane of the reflection
$\sigma_{\lambda_i}$ where $\lambda_i$ is  a fundamental weight
of $A_2$ and
$$
A_2^\vee/A_2=\{\,0,\,\lambda_1,\,\lambda_2 \mod A_2\}
$$
where the fundamental weights  are vectors of minimal length $\lambda_i^2=2/3$
in the corresponding $A_2$-classes.
Then $3\lambda_i\in A_2(-1)$ is reflective $(-6)$-vector,
i.e. one of the $G_2$-roots of the lattice $A_2$.
\begin{theorem}\label{thm-liftA2}
Let $k=1$, $2$ or $3$.
Then
$$
\Delta_{12-3k,\,kA_2}=\Lift(\psi_{12-3k,\,kA_{2}})\in M_{12-3k}(\Tilde{\Orth}^+(L(kA_2)))
$$
is strongly reflective and
$$
\div_{\cD(L(kA_2))}\Lift(\psi_{12-3k,\,kA_{2}})=
\bigcup_{\substack {\pm v\in L(kA_2)\vspace{0.5\jot} \\  v^2=-6,\ \div(v)=3}}
\cD_v(L(kA_2)).
$$
For $k=1$ and $2$ the lifting is a cusp form.
\end{theorem}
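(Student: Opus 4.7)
The plan is to follow the template of Theorem~\ref{thm-liftD8}, replacing the $D_k$-theta products by the affine $\hat A_2$-denominator function $\Theta(\tau,z_1,z_2)$. First I would verify that the three Jacobi forms $\psi_{12-3k,\,kA_2}$ are holomorphic of the claimed weight and of index~$1$ for $kA_2$ with trivial multiplier: the character cancellation is immediate since $v_\eta^{24-8k}\cdot(v_\eta^{8})^k=v_\eta^{24}=1$, and cuspidality in $\tau$ for $k=1,2$ is visible from the positive power of $\eta$ in the product. Applying the Jacobi lifting of \cite[Theorem~3.1]{G2} produces $\Lift(\psi_{12-3k,\,kA_2})\in M_{12-3k}(\Tilde{\Orth}^+(L(kA_2)))$ with trivial character, which is a cusp form for $k=1,2$ since $kA_2$ is a maximal even positive definite lattice (this is the same argument used at the end of the proof of Theorem~\ref{thm-liftD8}).

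To pin down the divisor I would realise the lifting as a Borcherds automorphic product. Following the $D_8$-recipe, I set
$$
\phi_{0,\,3A_2}(\tau,\gz)=\frac{2^{-1}\,\psi_{3,\,3A_2}\bigm|_3 T_{-}(2)}{\psi_{3,\,3A_2}}\ \in\ J_{0,1}^{(weak)}(3A_2),
$$
a weak Jacobi form of weight~$0$ and index~$1$ (since $T_{-}(2)$ sends index~$1$ to index~$2$). Using the Jacobi triple product (\ref{theta-JTF}) to expand the three summands coming from $T_{-}(2)$, I expect the $q^0$-term to have the shape
$$
\phi_{0,\,3A_2}(\tau,\gz)=\sum_{i=1}^{3}\bigl(\zeta_i^{\lambda_1}+\zeta_i^{-\lambda_1}+\zeta_i^{\lambda_2}+\zeta_i^{-\lambda_2}\bigr)+c_0+q(\cdots),
$$
where $\zeta_i^{\lambda}$ denotes the character of the fundamental weight $\lambda$ in the $i$-th $A_2$-block. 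By Lemma~\ref{lem-Jf} and the fact that $2/3$ is the minimal square-norm in each non-trivial class of $A_2^\vee/A_2$, these monomials together with the constant capture every Fourier coefficient $c(n,\ell)$ with $2n-(\ell,\ell)\le 0$.

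Given this expansion, the Borcherds product $B(\phi_{0,\,3A_2})$ constructed as in \cite[Theorem~2.1]{GN4} is a modular form of weight~$3$ for $\Tilde{\Orth}^+(L(3A_2))$ whose divisor in $\cD(L(3A_2))$ is the union of the $\cD_v$ attached to the primitive vectors $v\in L(3A_2)$ read off from the above negative-norm Fourier coefficients. A direct translation identifies these vectors as the primitive $(-6)$-vectors with $\div(v)=3$; Eichler's criterion (\cite[p.~1195]{G2}, \cite{GHS3}) further groups them into a single $\Tilde{\Orth}^+(L(3A_2))$-orbit, characterised by $v^2=-6$ together with the class $v/3\equiv\pm\lambda_j$ in one $A_2$-block modulo the permutation of blocks, and the multiplicity along every component equals one by construction. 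Koecher's principle then identifies $\Lift(\psi_{3,\,3A_2})$ with $B(\phi_{0,\,3A_2})$ up to a scalar, giving the case $k=3$. For $k=2$ and $k=1$ I restrict $\phi_{0,\,3A_2}$ by setting the surplus $A_2$-variables to zero; this remains a weak Jacobi form of weight~$0$ and index~$1$ for $kA_2$ with the same pattern of negative-norm Fourier coefficients in the surviving blocks, and the same Borcherds product / Koecher's principle argument finishes the proof.

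The main technical obstacle is the explicit control of $\phi_{0,\,3A_2}$: one must show that after the $T_{-}(2)$-manipulation precisely the fundamental-weight monomials $\zeta_i^{\pm\lambda_{1,2}}$ survive in the $q^0$-term with coefficient one, and that no additional Fourier coefficients with negative hyperbolic norm appear. This amounts to a careful application of the Jacobi triple product to the three Atkin--Lehner-type theta quotients, exactly parallel to the three summands displayed for $\phi_{0,\,D_8}$ in the proof of Theorem~\ref{thm-liftD8}, and is where the cubic structure of $A_2^\vee/A_2$ plays the decisive role.
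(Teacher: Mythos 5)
You follow the paper's own route essentially verbatim: the same weak Jacobi form $\phi_{0,\,3A_2}=\bigl(2^{-1}\psi_{3,\,3A_2}|_3T_{-}(2)\bigr)/\psi_{3,\,3A_2}$, the Borcherds product as in \cite[Theorem 2.1]{GN4}, control of the negative-norm coefficients via Lemma \ref{lem-Jf}, the Eichler criterion, Koecher's principle to identify the lifting with the product, and restriction of the surplus $A_2$-variables to zero for $k=1,2$; the argument is sound. One correction to the step you single out as the crux: the $q^0$-term is not quite what you predict. The actual expansion is $\phi_{0,\,3A_2}=6+\sum_{i=1,3,5}\bigl(\zeta_i^{\pm1}+\zeta_{i+1}^{\pm1}+(\zeta_i\zeta_{i+1}^{-1})^{\pm1}\bigr)+q(\dots)$, i.e.\ all six minimal vectors $\pm\lambda_1,\,\pm\lambda_2,\,\pm(\lambda_1-\lambda_2)$ of each $A_2^\vee$-block occur with coefficient one; they are forced to, since Fourier coefficients of a Jacobi form depend only on the hyperbolic norm and the class modulo the lattice, and $\lambda_1-\lambda_2\equiv\lambda_2\bmod A_2$. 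So your verification target (``precisely $\zeta_i^{\pm\lambda_{1,2}}$ survive and no further negative-norm coefficients appear'') is literally false, but harmlessly so: the extra monomials lie in the same discriminant classes, hence give the same Heegner divisors with the same multiplicity one, and $c(0,0)=6$ still yields the required weight $3$. A second small slip: the $(-6)$-vectors with $\div(v)=3$ form several $\Tilde{\Orth}^+(L(3A_2))$-orbits (one for each class $\pm\lambda_j$ in each block, since block permutations do not act trivially on the discriminant group), not a single orbit --- which is why Theorem \ref{thm-liftA2}, unlike Theorem \ref{thm-liftD8}, does not assert a single orbit; again the union of the $\cD_v$ in the statement is unaffected.
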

\begin{proof}
The proof is similar to the proof of Theorem \ref{thm-liftD8}.
It is enough to find a Borcherds product for the singular modular form
$\Lift(\psi_{3,\,3A_2})$.
To construct a weak Jacobi form of weight $0$ we again use the Hecke
operator $T_{-}(2)$ of the  Jacobi lifting. We put
$$
\phi_{0,\,3A_2}(\tau,\gz)=
\frac{2^{-1}\psi_{3,\,3A_2}|_3 T_{-}(2)}{\psi_{3,\,3A_2}}
$$
where
$$
2^{-1}\psi_{3,\,3A_2}|\, T_{-}(2)=
4\psi_{3,\,3A_2}(2\tau,2\gz_6)
+\frac{1}2
\psi_{3,\,3A_2}(\frac{\tau}2,\gz_6)
+\frac{1}2
\psi_{3,\,3A_2}(\frac{\tau+1}2,\gz_6).
$$
Analyzing the divisor of $\psi_{3,\,3A_2}$ we see that $\phi_{0,\,3A_2}\in J^{(week)}_{0,1}(3A_2)$.
Moreover the direct calculation shows that
$$
\phi_{0,\,3A_2}(\tau,\gz)=
\sum_{\substack{n\ge 0\\ \ell\in U\oplus 3A_2(-1)^\vee}}
c(n,\ell)\exp(2\pi i (n\tau+(\ell,\gz_6)))=
$$
$$
6+\sum_{i=1,3,5}
(\zeta_i^{\pm 1}+\zeta_{i+1}^{\pm 1} + (\zeta_i\zeta_{i+1}^{-1})^{\pm 1})
+q(\dots)
$$
where $\zeta_i=\exp(2\pi i z_i)=\exp(2\pi i (\gz_6,\lambda_i))$
and $\lambda_i$ are the fundamental weights of the corresponding copies of $A_2$.
According to  Lemma \ref{lem-Jf}, in order to  obtain
all Fourier coefficients  $c(n,\ell)\ne 0$ with $2n-\ell^2<0$
one has to check only coefficients with $2n-\ell^2\ge -2$.
The sum over $i\in \{1,2,3\}$ in the formula above contains  all such coefficients.
Therefore the Borcherds product $B(\phi_{0,\,3A_2})$ is of weight $c(0,0)/2=3$
with divisors of order $1$ along all $\Tilde{\Orth}^+(L(3A_2))$-orbits
of the $(-6)$-vectors $\pm \lambda_i$, $\pm \lambda_{i+1}$ and
$\pm(\lambda_i-\lambda_{i+1})$ ($i\in \{1,2,3\}$).
Using Koecher's principle as in the proof of Theorem \ref{thm-liftD8}
we find that
$$
\Lift(\psi_{3,\,3A_2})=B(\phi_{0,\,3A_2}).
$$
Therefore $\Lift(\psi_{3,\,3A_2})$ is strongly reflective.
To find a weak Jacobi for $A_2$ and $2A_2$ we put
$$
\phi_{0,\,2A_2}(\tau,\gz_4)=\phi_{0,\,3A_2}(\tau,\gz_4,0,0)=
12+\sum_{i=1,3}(
\zeta_i^{\pm 1}+\zeta_{i+1}^{\pm 1} + (\zeta_i\zeta_{i+1}^{-1})^{\pm 1})+q(\dots)
$$
and
$$
\phi_{0,\,A_2}(\tau,\gz_2)=\phi_{0,\,3A_2}(\tau,\gz_2,0,0,0,0)=
18+
\zeta_1^{\pm 1}+\zeta_{2}^{\pm 1} + (\zeta_1\zeta_{2}^{-1})^{\pm 1}+q(\dots).
$$
Therefore
$$
\Lift(\psi_{6,\,2A_2})=B(\phi_{0,\,2A_2})\quad{\rm and }\quad
\Lift(\psi_{9,\, A_2})=B(\phi_{0,\,A_2}).
$$
The last  modular form of weight $9$ was constructed
in \cite[Proposition 4.2]{De}. The method of  construction of $\phi_{0,\,A_2}$ in \cite{De}
was different.
\end{proof}
\noindent
{\bf Remark.} The strongly reflective form $\Lift(\psi_{12-3n,\,nA_{2}})$
(for $n=2$, $3$)
is invariant with respect to permutations of any two copies of $A_2$.
A permutation does not change the branch  divisor. It follows that
the cusp form $\Lift(\psi_{6,\,2A_{2}})$ determines, in fact, two modular varieties
of Kodaira dimension $0$. These are  the variety of Theorem 2.2
and the variety for the double extension of the stable orthogonal group
$\Tilde{\SO}^+(2U\oplus 2A_2(-1))$ corresponding to the permutation
of the two copies of $A_2(-1)$.

\section{$A_1$-tower of strongly
reflective modular  forms}

In this section we use the Jacobi theta-series as Jacobi modular forms
of half-integral index.
\begin{theorem}\label{thm-liftA1}
There exist four  strongly reflective modular forms for $L(nA_1)$
with  $n=1$, $2$, $3$ and $4$:
$$
\Delta_{6-n,\, nA_1}\in M_{6-n}(\Tilde{\SO}^+(L(nA_1)),\chi_2)
$$
where $\chi_2: \Tilde{\SO}^+(L(nA_1))\to \{\pm 1\}$.
Moreover
$$
\div_{\cD(L(nA_1))} \Delta_{6-n,\, nA_1}=
\bigcup_{\substack {\pm v\in L(nA_1)\vspace{0.5\jot} \\  v^2=-2,\ \div(v)=2}}
\cD_v(L(nA_1))
$$
and $\Delta_{6-n,\, nA_1}$ is a cusp form if $n<4$.
\end{theorem}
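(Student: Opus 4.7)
The plan is to follow the same three-step template that yielded Theorems \ref{thm-liftD8} and \ref{thm-liftA2}: (i) build the Jacobi form as a product of theta-functions and an eta-power, (ii) lift it, and (iii) identify the lifting with a Borcherds product coming from a weight $0$ weak Jacobi form, reading off the divisor from its Fourier expansion.

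First I would define the tower
$$
\psi_{6-n,\,nA_1}(\tau,\gz_n)=\eta(\tau)^{12-3n}\prod_{i=1}^{n}\vartheta(\tau,z_i), \qquad 1\le n\le 4,
$$
and verify, via the transformation laws (\ref{theta-H}) and (\ref{theta-SL}) together with the standard modularity of $\eta$, that this is a Jacobi form of weight $6-n$ and index $1/2$ for $nA_1$. The product of $n$ copies of $v_H$ with $v_\eta^{12}$ collapses to a single binary character, which is the source of the $\chi_2$ in the theorem. For $n<4$ the factor $\eta^{12-3n}$ kills the $q^0$ coefficient so $\psi_{6-n,\,nA_1}$ is cuspidal, while for $n=4$ we obtain a form of singular weight $2=\frac{1}{2}\rank(4A_1)$. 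Note that differentiating and setting $z_n=0$ reproduces the previous step of the tower, so one may view $\psi_{2,\,4A_1}$ as the generator and the others as quasi pull-backs.

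Next I would apply the Jacobi lifting of \cite{G2}, adapted to half-integral index with binary character (equivalently, pass to the rescaled lattice $nA_1(2)$ so that the index becomes $1$, perform the lifting of \cite{G2}, then interpret the result back on $L(nA_1)$). Because the Heisenberg character obstructs extension to the full group $\tilde{\Orth}^+(L(nA_1))$, the output is modular only on the index-$2$ subgroup $\tilde{\SO}^+(L(nA_1))$, carrying a binary character $\chi_2$. This gives $\Delta_{6-n,\,nA_1}=\Lift(\psi_{6-n,\,nA_1})\in M_{6-n}(\tilde{\SO}^+(L(nA_1)),\chi_2)$, cuspidal for $n<4$ because $nA_1$ is a maximal even lattice.

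For strong reflectivity I would manufacture a weak Jacobi form of weight $0$ by the same device used in Theorems \ref{thm-liftD8} and \ref{thm-liftA2}, namely
$$
\phi_{0,\,nA_1}(\tau,\gz_n)=\frac{2^{-1}\,\psi_{6-n,\,nA_1}\,|_{6-n}\,T_{-}(2)}{\psi_{6-n,\,nA_1}}\in J^{(weak)}_{0,1}(nA_1),
$$
and compute its $q^0$-part using (\ref{theta-JTF}). By Lemma \ref{lem-Jf} every Fourier coefficient $c(n,\ell)$ with $2n-(\ell,\ell)<0$ already appears in the $q^0$-part, so a direct calculation should yield
$$
\phi_{0,\,nA_1}(\tau,\gz_n)=2(6-n)+\sum_{i=1}^{n}(\zeta_i+\zeta_i^{-1})+q(\dots),
$$
where the exponents $\pm e_i^\vee$ are precisely the minimal-length representatives in the nontrivial classes of $(nA_1)^\vee/nA_1$. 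By the Eichler criterion these classes all lie in a single $\tilde{\SO}^+(L(nA_1))$-orbit of $(-2)$-vectors of divisor $2$, so the Borcherds product $B(\phi_{0,\,nA_1})$ has divisor equal to the right-hand side of the claim, with multiplicity one, and its weight equals $c(0,0)/2=6-n$. Comparing first Fourier--Jacobi coefficients with $\Lift(\psi_{6-n,\,nA_1})$ and invoking Koecher's principle gives $\Lift(\psi_{6-n,\,nA_1})=B(\phi_{0,\,nA_1})$, which proves strong reflectivity and the divisor formula simultaneously.

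The main obstacle is the bookkeeping around half-integral index and the Heisenberg character: one must verify carefully that the Gritsenko lifting formula (\ref{eq-litfJ}) extends to this setting, that the Hecke-type operator $T_{-}(2)$ preserves the relevant character, and that the resulting binary character $\chi_2$ on $\tilde{\SO}^+(L(nA_1))$ is well defined and matches the character produced by the Borcherds product. The case $n=1$ deserves a separate sanity check, since it should recover the classical Siegel cusp form $\Delta_5\in S_5(\Sp_2(\ZZ),\chi_2)$ via the isomorphism $\PSp_2(\ZZ)\cong \SO^+(L(A_1))$, giving independent confirmation that the character computation is correct.
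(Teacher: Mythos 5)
There is a genuine gap at the central step: your weight-zero Jacobi form is built with the wrong Hecke operator. The form $\psi_{2,\,4A_1}=\vartheta(\tau,z_1)\cdots\vartheta(\tau,z_4)$ has index $\tfrac12$ and multiplier system $v_\eta^{12}\times v_H$ of order $2$. The operator $T_{-}(m)$ multiplies the index by $m$, so $\psi_{2,\,4A_1}|_2T_{-}(2)$ has index $1$ and the quotient $\bigl(\psi|_2T_{-}(2)\bigr)/\psi$ has index $\tfrac12$, not index $1$ as you claim; moreover $m=2$ is not coprime to the order of the binary character, so $T_{-}(2)$ does not even act compatibly with the multiplier system, and the quotient would not have trivial character. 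This is exactly why the paper uses $T_{-}(3)$ instead: $\psi_{2,\,4A_1}|_2T_{-}(3)$ has index $\tfrac32$ and the same character, so the quotient $\phi_{0,\,4A_1}=\bigl(3^{-1}\psi_{2,\,4A_1}|_2T_{-}(3)\bigr)/\psi_{2,\,4A_1}$ lands in $J^{(weak)}_{0,1}(4A_1)$ with trivial character, with $q^0$-term $4+\sum_i\zeta_i^{\pm1}$. With $T_{-}(2)$ your construction does not produce an element of $J^{(weak)}_{0,1}(nA_1)$ at all, so the Borcherds product, its weight $c(0,0)/2$, and the divisor computation that follow have no starting point. (A smaller inaccuracy: the exponents $\pm e_i/2$ lie in $n$ distinct classes of $(nA_1)^\vee/nA_1$, hence in distinct $\Tilde{\SO}^+$-orbits for $n\ge 2$; this does not affect the divisor formula, which is a union over all such vectors, but the ``single orbit'' claim is false.)

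Two further points of comparison with the paper's argument. First, the paper runs the Hecke-quotient construction only once, for the singular-weight form $\psi_{2,\,4A_1}$, and obtains the lower members of the tower by restriction, $\phi_{0,\,nA_1}(\tau,\gz_n)=\phi_{0,\,4A_1}(\tau,\gz_n,0,\dots,0)$, rather than repeating the construction at each $n$ (where the $\eta$-factors would cause the same character problems you acknowledge). Second, the paper does not prove the theorem through the lifting at all: it defines $\Delta_{6-n,\,nA_1}$ directly as the Borcherds products $B(\phi_{0,\,nA_1})$, reading the group, the character $\chi_2$ (induced by the character of the first Fourier--Jacobi coefficient $\psi_{2,\,4A_1}e^{\pi i\omega}$), and the divisor from that construction; the identification with a Jacobi lifting of $\eta^{12-3n}\prod\vartheta(\tau,z_i)$ is only stated afterwards as a remark, with details deferred elsewhere. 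Your plan makes the (half-integral index, character-twisted) lifting and the equality $\Lift=B$ load-bearing, which is a second unproved ingredient; your suggested rescaling fix is also not straightforward, since $J_{k,1/2}(nA_1)=J_{k,1}(nA_1(\tfrac12))$ involves an odd lattice, which is precisely why the Heisenberg character appears.
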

\begin{proof}
In the proof we construct these modular forms as Borcherds products.
As in the proof of Theorem \ref{thm-liftD8} and Theorem \ref{thm-liftA2},
the main function of this $4A_1$-tower is the modular form
$\Delta_{2,\, 4A_1}$  of singular weight.
We put
$$
\psi_{2,\,4A_1}(\tau, \gz_4)=
\vartheta(\tau,z_1)\vartheta(\tau,z_2)\vartheta(\tau,z_3)\vartheta(\tau,z_4).
$$
This is a Jacobi form of weight $2$ and index $\frac{1}2$ with character $v_\eta^{12}\times v_H$
of order $2$ where $v_H$ is the binary character of the Heisenberg group
$H(4A_1)$. We can define the following weak Jacobi form of weight $0$
$$
\phi_{0,\,4A_1}(\tau,\gz_4)=
\frac{3^{-1}\psi_{2,\,4A_2}|_2 T_{-}(3)}{\psi_{2,\,4A_1}}
\in J_{0, 1}^{(weak)}(4A_1)
$$
where
$$
3^{-1}\psi_{2,\,4A_1}|_2T_{-}(3)=
3\psi_{3,\,3A_2}(3\tau,3\gz_4)
+\frac{1}3
\sum_{b=0}^{2}\psi_{2,\,4A_1}(\frac{\tau+b}3,\gz_4).
$$
The straightforward  calculation shows that
$$
\phi_{0,\,4A_1}(\tau,\gz_4)=
4+\sum_{i=1}^4
\zeta_i^{\pm 1} +q(\dots).
$$
We put $\tilde\phi_{0,\,4A_1}(Z)=\phi_{0,\,4A_1}(\tau,\gz_4)e^{2\pi i \omega}$.
In terms of this Jacobi form the Borcherds product is given by the following formula
(see \cite[(2.7)]{GN4})
$$
B(\phi_{0,\,4A_1})(\tau,\gz_4,\omega)=
\bigl(\psi_{2,\,4A_1}(\tau,\gz_4)e^{\pi i \omega}\bigr)
\exp\biggl(-\sum_{m\ge 1}m^{-1} \tilde\phi_{0,4A_1}|T_{-}(m)(Z)\biggr).
$$
This formula shows that
$\Delta_{2,\, 4A_1}=B(\phi_{0,\,4A_1})$ is a modular form of weight $2$
with respect to $\Tilde{\SO}^+(L(4A_1))$
and  with divisor described in the theorem.
The coefficient before the exponent is the first Fourier-Jacobi coefficient of
$B(\phi_{0,\,4A_1})$.
Therefore   the character of $B(\phi_{0,\,4A_1})$ is the binary character
induced by the character of $\psi_{2,\,4A_1}(\tau,\gz_4)$.
As in the proof of Theorem \ref{thm-liftD8} and Theorem \ref{thm-liftA2}
of this section  we  put
$$
\phi_{0,\,nA_1}(\tau,\gz_n)=\phi_{0,\,4A_1}(\tau,\gz_n, 0,\dots, 0)\qquad
(1\le n<4).
$$
It gives  the three other strongly reflective modular forms.
The last  function   of this $4A_1$-tower is the Siegel modular form
$\Delta_5$, which is  the Borcherds product
defined by the Jacobi form
$$
\phi_{0,\,A_1}(\tau,z)=\phi_{0,1}(\tau,z)=\zeta+10+\zeta^{-1}+q(\dots)
\in J_{0,1}^{(weak)}
$$
(see \cite{GN1}).
\end{proof}

It is possible to get a Jacobi lifting construction of the strongly
reflective modular forms of the last theorem. We can prove that
$$
\Delta_{6-n,\,nA_1}=\Lift\bigl(\eta(\tau)^{12-3n}\prod_{i=1}^{n}\vartheta(\tau,z_i)\bigr).
$$
Here we take  a Jacobi lifting with a character similar way
to  \cite[Theorem 1.12]{GN4}. This lifting  gives the elementary formula for the
Fourier coefficients of  $\Delta_{2,\,4A_1}$ and $\Delta_{3,\,3A_1}$
similar to (\ref{eq-FexpD8}). For example we have the following Fourier expansion
of  the modular form of singular weight
$$
\Lift(\psi_{2,\,4A_1})(Z)=
\sum_{\substack{ \ell=(l_1,\dots, l_4)\\
 \vspace{0.5\jot} l_i\equiv \frac 1{2} \,{\rm mod \,}\ZZ}}
$$
$$
\sum_{\substack{
 n,\,m\in \ZZ_{>0}\\
 \vspace{0.5\jot} n\equiv m\equiv 1\,{\rm mod\,}\ZZ\\
\vspace{0.5\jot}  nm-(\ell,\ell)=0}}
\sigma_1((n,\ell,m))
\biggl(\frac{-4}{2l_1}\biggr)\dots \biggl(\frac{-4}{2l_4}\biggr)
\exp(\pi i (n\tau+ 2(\ell,\gz_4)+m\omega)).
$$
See details in the forthcoming paper of
F.~Cl\'ery and V.~Gritsenko ``{\it Jacobi modular forms and root systems}".
\smallskip

\noindent
{\bf Remark.} The $14$ strongly reflective modular forms constructed in
Theorems \ref{thm-liftD8},  \ref{thm-liftA2} and \ref{thm-liftA1}
determine Lorentzian Kac--Moody super Lie algebras
of Borcherds type in a way described in \cite{GN1}--\cite{GN4}.
The details of this construction and many other examples will appear in our
forthcoming paper with V.~Nikulin.
\smallskip

\noindent
{\bf Conclusion.}
To finish this paper we  would like to  characterize the three series of
the strongly reflective modular forms  considered above and to formulate
a conjecture on similar modular forms.
To this aim we come back to the first two examples of \S 1.
The divisor of the Borcherds form $\Phi_{12}$ is defined by all $(-2)$-roots
in $II_{2,26}$. This is  the irreducible  reflective divisor in
$\cF_{II_{2,26}}(\Orth^+(II_{2,26}))$.
For the Igusa modular forms the situation is different.
The divisor of $\Delta_{35}$ is  the branch divisor
of the Siegel modular threefold
$$
\pi :\HH_2\to \Sp_2(\ZZ)\setminus \HH_2\cong
\cF_{2U\oplus A_1(-1)}(\SO^+(2U\oplus A_1(-1)))
$$
containing  two irreducible components.
The first one $\pi(\cD_{-2}(1))$ is defined  by the $(-2)$-vectors
with divisor $1$.
The second one  $\pi(\cD_{-2}(2))$ is  generated   by
the $(-2)$-vectors with divisor $2$.
They are the Humbert modular surfaces of determinant $4$
and $1$ respectively.
The divisor of the Igusa form
$\Delta_5\left(\begin{smallmatrix}
\tau&z\\z&\omega \end{smallmatrix}\right)
=\Lift(\eta(\tau)^9\vartheta(\tau,z))$ coincides with
$\pi(\cD_{-2}(2))$.
This is the simplest divisor of the Siegel threefold $\pi(\{z=0\})$.
The modular form  $\Delta_{35}$ is not a Jacobi lifting. Its first Fourier-Jacobi
coefficient is zero and the second one is equal to
$\eta(\tau)^{69}\vartheta(\tau,2z)$. See \cite{GN2} where  the  Borcherds product
of $\Delta_{35}$ was constructed. Moreover
$\Delta_{35}$ can be considered as a ``baby" function of $\Phi_{12}$
(see our  forthcoming paper mentioned in Remark 2 of \S 3).
We may say that the  fourteen strongly  reflective modular forms constructed in \S 3--\S 5
are similar to $\Delta_5$. Each of them  is the Jacobi lifting of its first Fourier--Jacobi
coefficient and its divisor is the simplest divisor of the corresponding
modular variety.
For the modular forms of the $4A_1$-tower, the simplest  divisor is  generated by
the $(-2)$-vectors  with divisor $2$.
For the $D_8$-tower, the  divisor is  generated by
the $(-4)$-reflective vectors, and
for the $3A_2$-tower, it is  generated by
the $(-6)$-reflective vectors of divisor $3$.
All these divisors  are complementary  to  the divisor defined by
the $(-2)$-roots with divisor one.

We remind  the following general  fact.
Let  $L$ be  a non-degenerate even integral  lattice and $h\in L$ be
a primitive vector with $h^2=2d$. If $L_h$ is the orthogonal complement of $h$ in
$L$ then
$$
|\det L_h|=\frac {|2d|\cdot |\det L|}{\div (h)^2}.
$$
Therefore, $\det L_h$  for  the reflective vectors considered above
is smaller than $\det L_r$ for a $(-2)$-root  $r$ with $\div(r)=1$.

There is the second explanation why these divisors are simpler.
The divisor $\pi(\cD_v)$, where $\pi$ is a modular projection, is a modular variety of orthogonal type.
For reflective vectors ($\sigma_v$ or $-\sigma_v$ is in $\Gamma$) they form
the reflective obstruction  to  extending
of pluricanonical forms to a compact model (see the proof of Theorem 1.5).
A numerical measure of this  obstruction is given by  the Hirzebruch--Mumford volume
of $\pi(\cD_v)$. This volume was calculated explicitly in \cite{GHS4}
for arbitrary   indefinite lattice.
We can say that the divisor  $\pi( \cD_v)$ is simpler if
its  Hirzebruch--Mumford volume is smaller.

We would like to formulate a conjecture related to  the modular forms of this paper.
Let $L$ be a lattice of signature $(2,n)$ with $n\ge 3$. We assume that
the branch divisor of the modular variety $\cF_L(\Gamma)$ has several components.
We suppose that there exists a strongly reflective modular forms $F$ whose
divisor is equal to the simplest reflective divisor. {\it We conjecture that
$F$ could be constructed as an additive (Jacobi)
lifting.}

Note that the existence of a strongly  reflective modular form implies a strong
condition on the lattice (see \cite{GN3}). The Weyl group $W$ of the hyperbolic root system
related to the simplest divisor should be arithmetic (elliptic or parabolic in the sense of
\cite{GN3}) and the root system admits a Weyl vector.
\smallskip

{\noindent {\bfseries Acknowledgements}:
This work was supported by  the grant ANR-09-BLAN-0104-01.
The results of this  paper were presented at the conferences in Kyoto (June, 2009)
in Edinburgh (September 2009) and at ``Moduli in Berlin" (September 2009).
I  would  like to thank S.~Konda, V. Nikulin,  N.~Scheithauer and K.-I. Yoshikawa
for useful conversations.
The author is  grateful to  the  Max-Planck-Institut f\"ur Mathematik in Bonn
for support and for providing excellent
working conditions.}

\smallskip
\noindent
V.A.~Gritsenko\\
Universit\'e Lille 1\\
Laboratoire Paul Painlev\'e\\
F-59655 Villeneuve d'Ascq, Cedex\\
France\\
{\tt Valery.Gritsenko@math.univ-lille1.fr

\begin{thebibliography}{AMRT}
\bibitem[AMRT]{AMRT} A.~Ash, D.~Mumford, M.~Rapoport, Y.~Tai, {\it
  Smooth compactification of locally symmetric varieties.} Lie
  Groups: History, Frontiers and Applications, Vol. IV. Math. Sci.
  Press, Brookline, Mass., 1975.




\bibitem[B1]{B1} R.E.~Borcherds, {\it Automorphic forms on $O_{s+2,2}(R)$
and infinite products.}
Invent. Math. {\bf 120} (1995), 161--213.

\bibitem[B2]{B2} R.E.~Borcherds,
{\it The moduli space of Enriques surfaces and the fake monster Lie
superalgebra.}
Topology {\bf 35} (1996), 699--710.

\bibitem[B3]{B3} R.E.~Borcherds,
{\it Automorphic forms with singularities on Grassmannians.}
Invent. Math. {\bf 132} (1998), 491--562.

\bibitem[B4]{B4} R.E.~Borcherds, {\it Reflection groups of Lorentzian lattices.}
Duke Math. J. {\bf 104} (2000), 319--366.

\bibitem[Bai]{Bai}
W.L. Baily, {\it Fourier-Jacobi series}, in Algebraic Groups and Discontinuous Subgroups,
Proc. Symp. Pure Math. Vol. IX, eds. A. Borel, G. D. Mostow, Amer. Math. Soc.,
Providence, Rhode Island, 1966, 296--300.


\bibitem[Br]{Br} J.~Bruinier,
{\it Borcherds products on O(2, $l$) and Chern classes of Heegner divisors.}
Lecture Notes in Math.,
{\bf 1780}. Springer-Verlag, Berlin, 2002.

\bibitem[CG]{CG} F.~Cl\'ery,   V.~Gritsenko,
{\it Siegel modular forms with the simplest divisor.}
ArXiv:0812.3962, 35 pp.

\bibitem[CS]{CS} J.H.~Conway, N.J.A.~Sloane, {\it Sphere packings,
  lattices and groups.} Grundlehren der mathematischen
  Wissenschaften {\bf 290}. Springer-Verlag, New York, 1988.

\bibitem[De]{De}
C.~Desreumaux,
{\it Construction de formes automorphes r\'eflectives sur un espace de dimension $4$}.
J. Th\'eor. Nombres Bordeaux {\bf 18} (2006), 89--111.

\bibitem[Do]{Do} I.~Dolgachev, {\it Mirror symmetry for lattice polarized $\Kthree$
surfaces.} J. Math. Sci. {\bf 81} (1996), 2599--2630.

\bibitem[EZ]{EZ} M.~Eichler, D.~Zagier, {\it The theory of Jacobi
forms.} Progress in Mathematics {\bf 55}. Birkh\"auser, Boston,
Mass., 1985.

\bibitem[F]{F} E.~Freitag, {\it Siegelsche Modulfunktionen.}
  Grundlehren der mathematischen Wissenschaften {\bf 254}.
  Springer-Verlag, Berlin--G\"ottingen--Heidelberg, 1983.

\bibitem[FS-M]{FS-M} E.~Freitag, R.~Salvati Manni,
{\it Some Siegel threefolds with a Calabi-Yau model.}
ArXiv: 0905.4150, 18 pp.

\bibitem[G1]{G1} V.~Gritsenko,
{\it Jacobi functions of n-variables.}
Zap. Nauk. Sem. LOMI
{\bf 168} (1988), 32--45
English transl. in J. Soviet Math\. {\bf 53}
(1991), 243--252.

\bibitem[G2]{G2} V.~Gritsenko, {\it Modular forms and moduli spaces of
Abelian and $\Kthree$ surfaces.} Algebra i Analiz {\bf 6} (1994),
65--102; English translation in St. Petersburg Math. J. {\bf 6}
(1995), 1179--1208.

\bibitem[GH]{GH} V.~Gritsenko, K.~Hulek,
{\it The modular form of the Barth--Nieto quintic.}
Intern. Math. Res. Notices {\bf 17} (1999), 915--938.

\bibitem[GHS1]{GHS1} V.~Gritsenko, K.~Hulek, G.K.~Sankaran, {\it The
  Kodaira dimension of the moduli of K3 surfaces}. Invent. Math.
  {\bf 169} (2007), 519--567.

\bibitem[GHS2]{GHS2} V.~Gritsenko, K.~Hulek, G.K.~Sankaran,
  \emph{Moduli spaces of irreducible symplectic manifolds.}
   Compos. Math. {\bf 146} (2010), 404--434.

\bibitem[GHS3]{GHS3} V.~Gritsenko, K.~Hulek, G.K.~Sankaran,
\emph{Abelianisation of orthogonal groups and the fundamental group of modular
varieties.} J. of Algebra  {\bf 322}  (2009), 463--478.

\bibitem[GHS4]{GHS4} V.~Gritsenko, K.~Hulek, G.K.~Sankaran,
\emph{Hirzebruch-Mumford proportionality and locally symmetric varieties
  of orthogonal type}. Documenta Mathematica {\bf 13} (2008), 1-19.


\bibitem[GN1]{GN1} V.~Gritsenko, V.~Nikulin,
{Siegel automorphic form correction of some Lorentzi\-an
Kac--Moody Lie algebras.}
Amer. J. Math. {\bf 119} (1997), 181--224.

\bibitem[GN2]{GN2} V.~Gritsenko, V.~Nikulin,
{\it The Igusa modular forms and ``the simplest''
Lorentzian Kac--Moody algebras.}
Matem. Sbornik {\bf 187} (1996), 1601--1643.

\bibitem[GN3]{GN3} V.~Gritsenko, V.~Nikulin, {\it Automorphic forms and Lorentzian
Kac-Moody algebras.  I.} International J. Math. {\bf 9} (2) (1998),
153--200.

\bibitem[GN4]{GN4} V.~Gritsenko, V.~Nikulin, {\it Automorphic forms and Lorentzian
Kac-Moody algebras.  II.} International J. Math. {\bf 9} (2) (1998),
201--275.

\bibitem[GN5]{GN5} V.~Gritsenko, V.~Nikulin,
{\it K3 surfaces, Lorentzian Kac--Moody algebras and
mirror symmetry.}
Math. Res. Lett. {\bf 3} (1996), 211--229.

\bibitem[GN6]{GN6} V.~Gritsenko, V.~Nikulin,
{\it The arithmetic mirror symmetry and Calabi--Yau manifolds.}
Comm. Math. Phys. {\bf 200} (2000), 1--11.

\bibitem[HM]{HM} J.A.~Harvey, G.~Moore,
{\it Exact gravitational threshold correction
in the Ferrara-Harvey-Strominger-Vafa model.}
Phys. Rev. D {\bf 57} (1998), 2329--2336.

\bibitem[Ib]{Ib} T.~Ibukiyama,
{\it On Siegel modular variety of level $3$.}
Int. J. of Math. {\bf 2} (1991), 17--35.

\bibitem[Ka]{Ka}
V.~Kac,
{\it Infinite dimensional Lie algebras.} Cambridge Univ. Press, 1990.

\bibitem[Ko]{Ko} S.~Kondo, {\it On the Kodaira dimension of the
  moduli space of $\Kthree$ surfaces. II.} Compositio Math. {\bf
  116} (1999), 111--117.

\bibitem[N]{N} V.~Nikulin,
{\it Finite groups of automorphisms of K\"ahlerian $\Kthree$ surfaces.}
Trans. Moscow Math. Soc. {\bf 2} (1980), 71--135.


\bibitem[Sch]{Sch} N.~Scheithauer,
{\it On the classification of automorphic products and generalized Kac-Moody algebras.}
Invent. Math.  {\bf 164}  (2006), 641--678.

\bibitem[Y]{Y} K.-I.~Yoshikawa, {\it Calabi-Yau threefolds of Borcea-Voisin,
analytic torsion, and Borcherds products.}
Ast\'erisque, {\bf 327} (2009), 351--389.

\end{thebibliography}
\end{document}